\DeclareMathOperator*{\var}{var}
\DeclareMathOperator*{\sgn}{sgn}
\newtheorem{thm}{Theorem}[section]
\newtheorem{prop}[thm]{Proposition}
\theoremstyle{definition}
\newtheorem{dfn}{Definition}[section]
\theoremstyle{remark}
\theoremstyle{plain}
\newtheorem{lem}[thm]{Lemma}
\begin{document}

\title[Feynman-Kac for PAM driven by fractional noise]{Feynman-Kac representation for the parabolic Anderson model driven by fractional noise}
%\runtitle{Feynman-Kac for Parabolic Anderson Equation with fBM}

\author[K. Kalbasi]{Kamran Kalbasi}

\author[T. Mountford]{Thomas S. Mountford}

\address{Department of Mathematics, Ecole Polytechnique F\'{e}d\'{e}rale de Lausanne, CH-1015 Lausanne, Switzerland}

\keywords{
{F}eynman-{K}ac formula,
parabolic {A}nderson model,
stochastic heat equation,
fractional {B}rownian motion,
{M}alliavin calculus
}
%\address[focal]{Department of Mathematics, Ecole Polytechnique F\'{e}d\'{e}rale de Lausanne, %CH-1015 Lausanne, Switzerland}

\begin{abstract}
We consider the parabolic Anderson model driven by fractional noise:
$$
\frac{\partial}{\partial t}u(t,x)= \kappa \boldsymbol{\Delta} u(t,x)+ u(t,x)\frac{\partial}{\partial t}W(t,x)
\qquad x\in\mathds{Z}^d\;,\; t\geq 0\,,
$$
where $\kappa>0$ is a diffusion constant, $\boldsymbol{\Delta}$ is the discrete Laplacian defined by $\boldsymbol{\Delta} f(x)= \frac{1}{2d}\sum_{|y-x|=1}\bigl(f(y)-f(x)\bigr)$, and $\{W(t,x)\;;\;t\geq0\}_{x \in \mathds{Z}^d}$ is a family of independent fractional Brownian motions with Hurst parameter $H\in(0,1)$, indexed by $\mathds{Z}^d$. We make sense of this equation via a Stratonovich integration obtained by approximating the fractional Brownian motions with a family of Gaussian processes possessing absolutely continuous sample paths. We prove that the Feynman-Kac representation
\begin{equation}
u(t,x)=\mathbb{E}^x\Bigl[u_o(X(t))\exp \int_0^t W\bigl(\mathrm{d}s, X(t-s)\bigr)\Bigr]\,,
\end{equation}
is a mild solution to this problem. Here $u_o(y)$ is the initial value at site $y\in\mathds{Z}^d$, $\{X(t)\;;\;t\geq0\}$ is a simple random walk with jump rate $\kappa$, started at $x \in \mathds{Z}^d$ and independent of the family $\{W(t,x)\;;\;t\geq0\}_{x\in\mathds{Z}^d}$ and $\mathbb{E}^x$ is expectation with respect to this random walk. We give a unified argument that works for any Hurst parameter $H\in (0,1)$.

%In this paper we prove that the stochastic heat equation on $\mathds{Z}^d$ driven by fractional noise has a solution represented by Feynman-Kac formula.
\end{abstract}

\maketitle

\section{Introduction}
The parabolic Anderson model(PAM)named after the Nobel laureate physicist Philip W. Anderson, is the parabolic partial differential equation
\begin{equation}\label{general PAM}
\frac{\partial}{\partial t}u(t,x)= \kappa \boldsymbol{\Delta} u(t,x)+ \xi(t,x)\,u(t,x),
\qquad x\in\mathds{Z}^d\;,\; t\geq 0\,,
\end{equation}
where $\kappa>0$ is a diffusion constant and $\boldsymbol{\Delta}$ is the discrete Laplacian defined by $\boldsymbol{\Delta} f(x)= \frac{1}{2d}\sum_{|y-x|=1}\bigl(f(y)-f(x)\bigr)$. The potential $\{\xi(t,x)\}_{t,x}$ can be a random or deterministic field and even a Schwartz distribution.

The parabolic Anderson model which has been extensively studied, particularly in the last twenty years, has many applications and connections to problems in chemical kinetics, magnetic fields with random flow and the spectrum of random Schr\"odinger operators, to mention a few. The solution $u(t,x)$ of \eqref{general PAM} has also a population dynamics interpretation as the average number of particles at site $x$ and time $t$ conditioned on a realization of the medium $\xi$, where the particles perform branching random walks in random media. In this case, the first right-hand-side term of \eqref{general PAM} signifies the diffusion and the second term represents the birth/death of the particles. We refer to the classical work of Carmona and Molchanov \cite{Carmona} and the survey by G\"artner and K\"onig \cite{GaertnerKoenig}.

We consider the parabolic Anderson model with the potential $\xi(t,x):=\frac{\partial}{\partial t}W(t,x)$ for $x\in\mathds{Z}^d$ and $t\geq0$, where $\{W(t,x)\;;\;t\geq0\}_{x \in \mathds{Z}^d}$ is a family of independent fractional Brownian motions(fBM) of Hurst parameter $H$, indexed by $\mathds{Z}^d$.

As the paths of fBM are almost surely nowhere differentiable, this equation doesn't make sense a priori in the classical sense and we reformulate it in a mild sense:
\begin{equation}\label{e1}
    \left\{
        \begin{aligned}
        & u(t,x)-u(0,x)=\int_0^t \boldsymbol{\Delta} u(s,x)\,\mathrm{d}s+\int_0^t u(s,x)\, W(\mathrm{d}s,x)\\
        &u(0,x)=u_o(x)
        \end{aligned}
    \right.\,,
\end{equation}
where the stochastic integral is Stratonovich type in the sense that the fractional Brownian motion is approximated by a family of smooth processes $\{W^{\varepsilon}\}_{\varepsilon>0}$ and the integral $\int u\, \mathrm{d}W$ is defined by the limit of the family $\{\int u\, \mathrm{d}W^{\varepsilon}\}_{\varepsilon>0}$ as $\varepsilon$ tends to zero. We assume that $u_o(\cdot)$ is a bounded measurable function.

We will show that the following Feynman-Kac formula gives a solution to \eqref{e1}:
\begin{equation}\label{Feynman-Kac}
u(t,x)=\mathbb{E}^x\Bigl[u_o(X(t))\exp \int_0^t W\bigl(\mathrm{d}s, X(t-s)\bigr)\Bigr]\,,
\end{equation}
where $X(t)$ is a simple random walk with jump rate $\kappa$, started at $x \in \mathds{Z}^d$ and independent of the family $\{W(t,x)\;;\;t\geq0\}_{x\in\mathds{Z}^d}$ and $\mathbb{E}^x$ is expectation with respect to this random walk. Here the stochastic integral is nothing other than a summation. Indeed, suppose that $\{t_i\}_{i=1}^{n}$ are the jump times of the time-reversed random walk $\{X(t-s)\;,\; s\in[0,t]\}$ with the additional convention $t_0:=0$ and $t_{n+1}:=t$. Let also $x_i$ for $i=0,\cdots,n$ be the value of $\{X(t-\cdot)\}$ at time interval $[t_i,t_{i+1})$. Then we have
$$
\int_0^t W\bigl(\mathrm{d}s, X(t-s)\bigr)=\sum_{i=0}^{n}\bigl(W(t_{i+1},x_i)-W(t_{i},x_i)\bigr)\,.
$$

Carmona and Molchanov in their classical memoir \cite{Carmona} proved that for bounded $u_o$ and $H=1/2$ i.e.
standard Brownian motion, the Feynman-Kac formula \eqref{Feynman-Kac} solves equation \eqref{e1}. The asymptotic behavior of the Feynman-Kac expression \eqref{Feynman-Kac} as the partition function of a directed polymer in a random environment has been studied in \cite{Viens}, but its connection with the PAM has not been investigated.
The Feynman-Kac representation for PAM on $\mathds{R}^d$ driven by fractional noise was
established in \cite{Nualart1} for Hurst parameters $H\geq 1/2$ and in \cite{Nualart2} for $H\geq 1/4$.
Our method is able to prove this property without any restriction on $H$ due to the fact that in the discrete case one deals with locally constant random walk instead of Brownian motion which is only locally $\alpha$-H\"older continuous for $\alpha<1/2$.

The paper is organized as follows:\\
In section \ref{Preliminaries} we collect some important background material that we will use in the succeeding sections.\\
In section \ref{Setting} we outline our methodology including the approximation scheme that we apply to fractional Brownian motion. We show that the problem reduces to demonstrating the convergence of three expressions $u_\varepsilon$, $V_{1,\varepsilon}$ and $V_{2,\varepsilon}$.\\
It section \ref{Approximation rate}, we prove that piecewise-constant integrals with respect to the approximating processes introduced in section \ref{Setting} converge to integrals with respect to fractional Brownian motion.\\
The remaining chapters are devoted to showing the convergence of $u_\varepsilon$, $V_{1,\varepsilon}$ and $V_{2,\varepsilon}$.

\section{Preliminaries}\label{Preliminaries}

A Gaussian random process $W(\cdot)$ is called a fractional Brownian motion of Hurst parameter $H \in (0,1)$, if it has continuous sample paths and its covariance function is of the following form:
\begin{equation*}
\mathbb{E}[W(t)W(s)]=R_H(t,s):=\frac{1}{2}(|t|^{2H}+|s|^{2H}-|t-s|^{2H}).
\end{equation*}
This process was first introduced by Kolmogorov in \cite{Kolmogorov}, but the term ``Fractional Brownian motion'' was coined by Mandelbrot and Van Ness in \cite{Mandelbrot}.

Let $\{W(t,x); t\in\mathds{R}\}_{x\in \mathds{Z}^d}$ be a family of independent fractional Brownian motions indexed by $x\in \mathds{Z}^d$ all with Hurst parameter $H$.\\
Similar to \cite{Nualart2}, let $\mathcal{H}$ be the Hilbert space defined by the completion of the linear span of indicator functions $\mathbf{1}_{[0,t]\times\{x\}}$ for $t \in \mathds{R}$ and $x\in \mathds{Z}^d$ under the scalar product
\begin{equation*}
\langle\mathbf{1}_{[0,t]\times\{x\}} , \mathbf{1}_{[0,s]\times\{y\}}\rangle_\mathcal{H}=R_H(t,s)\,\delta_x(y)\,,
\end{equation*}
where $\delta$ is the Kronecker delta. Here we assume the convention $\mathbf{1}_{[0,t]\times\{x\}} := -\mathbf{1}_{[t,0]\times\{x\}}$ for negative $t$. The mapping $\mathbf{W}(\mathbf{1}_{[0,t]\times\{x\}}):= W(t,x)$ can be extended to a linear isometry from $\mathcal{H}$ onto the Gaussian Hilbert space spanned by $\{W(t,x)\;;\;t\in\mathds{R}\,,\,x\in \mathds{Z}^d\}$.

Similar to \cite{Nualart2}, for any piecewise constant function $X:\mathds{R}\rightarrow \mathds{Z}^d$, and every $s\in\mathds{R}$, $x\in\mathds{Z}^d$ and $\varepsilon>0$ we define the following functions on $\mathds{R}\times\mathds{Z}^d$:
\begin{equation}\label{gEpsilon}
g_{s,x}^\varepsilon (r,z):= \frac{1}{2\varepsilon} \mathbf{1}_{[s-\varepsilon,s+\varepsilon]}(r)\,\delta_x(z)\,,
\end{equation}
\begin{equation}\label{gX}
g_{s,x}^X (r,z):= \mathbf{1}_{[0,s]}(r)\, \delta_{X(s-r)}(z)\,,
\end{equation}
\begin{equation}\label{gEpsilonX}
g_{s,x}^{\varepsilon, X} (r,z):= \int_0^s \frac{1}{2\varepsilon} \mathbf{1}_{[\theta-\varepsilon,\theta+\varepsilon]}(r)\, \delta_{X(s-\theta)}(z)\, \mathrm{d}\theta\,.
\end{equation}
It can be easily shown that $g_{s,x}^\varepsilon$, $g_{s,x}^X$ and $g_{s,x}^{\varepsilon, X}$ are all in $\mathcal{H}$, and moreover
\begin{equation*}
\mathbf{W}(g_{s,x}^\varepsilon)= \dot{W}_\varepsilon(s,x)\,,
\end{equation*}
\begin{equation*}
\mathbf{W}(g_{s,x}^X)= \int_0^s W\bigl(\mathrm{d}\theta,X(s-\theta)\bigr)\,,
\end{equation*}
and
\begin{equation*}
\mathbf{W}(g_{s,x}^{\varepsilon, X}) = \int_0^s \dot{W}_{\varepsilon}\bigl(\theta,X(s-\theta)\bigr)\mathrm{d}\theta\,,
\end{equation*}
where $\dot{W_{\varepsilon}}(t,x):=\frac{1}{2\varepsilon} \bigl(W(t+\varepsilon,x)-W(t-\varepsilon,x)\bigr)$ for any $t\in\mathds{R}$ and $x\in\mathds{Z}^d$.

%
%Let $(\Omega, \mathfrak{F},\mathrm{P})$ be a probability space, $G$ be a Gaussian subspace of $\mathcal{L}^2(\Omega,\mathfrak{F},\mathrm{P})$, i.e. its elements are Gaussian mean zero real random variables and denote by $\mathfrak{G}$, the sigma field generated by $G$.

Let $\mathbf{G}$ be a Gaussian Hilbert space, $\mathbf{H}$ a Hilbert space and $\mathbf{W}:\mathbf{H}\rightarrow \mathbf{G}$ a Hilbert space isometry between $\mathbf{H}$ and $\mathbf{G}$. By a Gaussian Hilbert space we mean a set of zero-mean Gaussian random variables which is a Hilbert space with respect to covariance as its inner product \cite{Janson}. Define $\mathcal{S}$ as the space of random variables $F$ of the form:
\begin{equation*}
F=f\bigl(\mathbf{W}(\varphi_1) , \dots , \mathbf{W}(\varphi_n)\bigr)\,,
\end{equation*}
where $\varphi_i \in \mathbf{H}$ and $f\in C^\infty(\mathds{R}^n)$ with $f$ and all its partial derivatives having polynomial growth. The Malliavin derivative of $F$ denoted by $\nabla F$, is defined (see e.g. \cite{Nualart2,Janson,Nualart3,Sanz}) as the $\mathbf{H}$-valued random variable given by
\begin{equation*}
\nabla F:= \sum _ {i=1}^{n} \frac{\partial f}{\partial x_i}\bigl(\mathbf{W}(\varphi_1)  , ... , \mathbf{W}(\varphi_n) \bigr)\, \varphi_i\,.
\end{equation*}
%is closable to $\nabla: L^2(\Omega,\mathfrak{G},\mathrm{P}) \rightarrow L^2(\Omega,\mathfrak{G},\mathrm{P}; \mathbf{H})$.
The operator $\nabla$ extends to the Sobolev space $\mathbb{D}^{1,2}$ which is defined as the closure of $\mathcal{S}$ with respect to the following norm \cite{Nualart2,Janson}:
\begin{equation*}
\|F\|_{1,2}= \sqrt{\mathbb{E}(F^2)+\mathbb{E}(\|\nabla F\|^2_\mathbf{H})}\,.
\end{equation*}

The divergence operator $\boldsymbol{\delta}$ is the adjoint of the derivative operator $\nabla$, determined by the duality relationship \cite{Nualart2,Janson}
\begin{equation*}
\mathbb{E}(\boldsymbol{\delta}(u) F) = \mathbb{E}(\langle \nabla F ,  u\rangle_\mathbf{H}) \quad\text{for every}\;F\in \mathbb{D}^{1,2}.
\end{equation*}
The space of $\mathbf{H}$-valued Malliavin derivable $\mathcal{L}^2$ random variables with $\mathcal{L}^2$ derivatives, denoted by $\mathbb{D}^{1,2}(\mathbf{H})$, is contained in the domain of $\boldsymbol{\delta}$, and moreover for any $u\in \mathbb{D}^{1,2}(\mathbf{H})$, we have
\begin{equation}\label{divergence contractivity}
\mathbb{E}\bigl(\boldsymbol{\delta}(u)^2\bigr)\leq
\mathbb{E}\bigl(\|u\|^2_\mathbf{H}\bigr)+\mathbb{E}\bigl(\|\nabla u\|^2_{\mathbf{H}\otimes\mathbf{H}}\bigr)\,.
\end{equation}
For any random variable $F \in \mathbb{D}^{1,2}$ and $\varphi \in \mathbf{H}$ the change of variable formula \cite{Nualart2,Janson}:
\begin{equation}\label{change of variable}
F \mathbf{W}(\varphi) = \boldsymbol{\delta}(F \varphi ) + \langle \nabla F, \varphi\rangle_\mathbf{H}\,.
\end{equation}\\
For more on Malliavin calculus we refer to \cite{Janson,Nualart3}.

We will use the following lemma in several occasions:
\begin{lem}\label{Commutativity of integration with operators}
Let $(M,\mathcal{M},\mu)$ be a measure space and $B$, $B'$ be Banach spaces. Let also $\boldsymbol{\Lambda}:B \rightarrow B'$ be a continuous linear operator and $f:M\rightarrow B$ a separably-valued measurable function, i.e. there exists a separable subspace $B_1$ of $B$ such that $f \in B_1$ almost surely. If $\int \|f\|_B \mathrm{d}\mu < \infty$ then
$$
\boldsymbol{\Lambda} \int f \mathrm{d}\mu = \int \boldsymbol{\Lambda} f \mathrm{d}\mu\,.
$$
\end{lem}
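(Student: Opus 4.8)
The plan is to recognize this as the standard fact that a bounded linear operator commutes with the Bochner integral, and to prove it by the usual two-step scheme: first verify the identity for simple (finitely-valued) functions, where it is immediate from linearity, and then pass to the limit using the continuity of $\boldsymbol{\Lambda}$. The hypotheses are tailored to this: $f$ being measurable and essentially separably valued, together with $\int \|f\|_B\,\mathrm{d}\mu<\infty$, are precisely the conditions guaranteeing that $f$ is Bochner integrable, so that $\int f\,\mathrm{d}\mu$ is a well-defined element of $B$; I would begin by recording this.

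The first substantive step is to invoke the Bochner integrability criterion to produce a sequence of simple functions $f_n=\sum_k b_k^{(n)}\mathbf{1}_{A_k^{(n)}}$, with $b_k^{(n)}\in B_1$ and $A_k^{(n)}\in\mathcal{M}$ of finite measure, such that $\int\|f-f_n\|_B\,\mathrm{d}\mu\to 0$. This is the one place where the separably-valued assumption is genuinely used: it is exactly what permits $f$ to be approximated in $L^1$-norm by simple functions. For each such $f_n$ the asserted identity holds by linearity alone, since
$$
\boldsymbol{\Lambda}\int f_n\,\mathrm{d}\mu=\boldsymbol{\Lambda}\sum_k b_k^{(n)}\,\mu\bigl(A_k^{(n)}\bigr)=\sum_k \bigl(\boldsymbol{\Lambda} b_k^{(n)}\bigr)\,\mu\bigl(A_k^{(n)}\bigr)=\int \boldsymbol{\Lambda} f_n\,\mathrm{d}\mu\,.
$$

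Finally I would pass to the limit on both sides. On the left, the boundedness of $\boldsymbol{\Lambda}$ together with the elementary estimate $\|\int g\,\mathrm{d}\mu\|_B\leq\int\|g\|_B\,\mathrm{d}\mu$ gives
$$
\bigl\|\boldsymbol{\Lambda}\!\int f\,\mathrm{d}\mu-\boldsymbol{\Lambda}\!\int f_n\,\mathrm{d}\mu\bigr\|_{B'}\leq \|\boldsymbol{\Lambda}\|\int\|f-f_n\|_B\,\mathrm{d}\mu\longrightarrow 0\,.
$$
On the right, one first checks that $\boldsymbol{\Lambda} f$ is itself Bochner integrable: it is measurable as the composition of $f$ with the continuous map $\boldsymbol{\Lambda}$, takes values in the separable subspace $\boldsymbol{\Lambda}(B_1)$, and satisfies $\int\|\boldsymbol{\Lambda} f\|_{B'}\,\mathrm{d}\mu\leq\|\boldsymbol{\Lambda}\|\int\|f\|_B\,\mathrm{d}\mu<\infty$; then $\int\|\boldsymbol{\Lambda} f-\boldsymbol{\Lambda} f_n\|_{B'}\,\mathrm{d}\mu\leq\|\boldsymbol{\Lambda}\|\int\|f-f_n\|_B\,\mathrm{d}\mu\to 0$ shows $\int\boldsymbol{\Lambda} f_n\,\mathrm{d}\mu\to\int\boldsymbol{\Lambda} f\,\mathrm{d}\mu$. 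Since the identity holds for every $n$, letting $n\to\infty$ yields the claim. I do not expect a serious obstacle: the only point demanding attention is the appeal to the separably-valued hypothesis, which simultaneously guarantees the existence of $\int f\,\mathrm{d}\mu$ and the availability of the approximating simple functions, after which everything reduces to a routine $L^1$ limit argument exploiting that $\boldsymbol{\Lambda}$ is bounded.
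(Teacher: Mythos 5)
Your proof is correct and follows essentially the same route as the paper's: approximation of $f$ in $L^1$-norm by simple functions (using the separably-valued hypothesis), the identity for simple functions by linearity, and passage to the limit on both sides via the boundedness of $\boldsymbol{\Lambda}$ and the estimate $\|\int g\,\mathrm{d}\mu\|\leq\int\|g\|\,\mathrm{d}\mu$. Your write-up is, if anything, slightly more careful than the paper's in explicitly verifying the Bochner integrability of $\boldsymbol{\Lambda} f$ before taking the limit on the right-hand side.
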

\begin{proof}
As $f$ is separably-valued, there exists \cite{Janson,LedouxTalagrand} a sequence of simple functions $\{u_n\}_n$ of the form $\sum_i \mathbf{1}_{A_i} h_i$ with $A_i \in \mathcal{M}$ and $h_i \in B$ with the property that
$$
\int \| u_n - f \|_B \mathrm{d}\mu \longrightarrow 0  \qquad as \qquad n \rightarrow \infty
\,.$$
As $\boldsymbol{\Lambda}$ is linear, it commutes with integration on $\{u_n\}_n$. As $\boldsymbol{\Lambda}$ is continuous we have $\|\boldsymbol{\Lambda} (x)\|_B \leq C \|x\|_{B'}$ for some positive constant $C$, so
$$
\int \| \boldsymbol{\Lambda}(u_n - f) \|_{B'} \mathrm{d}\mu \leq C \int \| (u_n - f) \|_B \mathrm{d}\mu
$$
and also
\begin{equation*}
\begin{split}
&\| \boldsymbol{\Lambda} \int (u_n - f)  \mathrm{d}\mu \|_{B'} \leq C \| \int (u_n - f)  \mathrm{d}\mu \|_B\\
& \qquad \qquad \leq  C  \int \|u_n - f\|_B  \mathrm{d}\mu\,.
\end{split}
\end{equation*}
Hence $\boldsymbol{\Lambda}$ commutes with integration for $f$ too.
\end{proof}

\section{Setting}\label{Setting}
As explained in the previous section we aim to approximate the fractional Brownian motions with a family of smooth Gaussian processes. There are two obvious ways to approximate a (fractional) Brownian motion. First the so-called Wong-Zakai approximation scheme \cite{Twardowska} which is the piecewise linear approximation of (fractional) Brownian motion paths. The second natural scheme is as follows: The time derivative of a fractional Brownian motion does not exist in the classical sense but only in the distributional sense. The idea is to approximate the `derivative' of the fractional Brownian motion and then integrate it. Indeed we define the approximate derivative of $W(\cdot,x)$ as $\dot{W_{\varepsilon}}(\cdot,x)$
\begin{equation}\label{Derivative approximation formula}
\dot{W_{\varepsilon}}(t,x):=\frac{1}{2\varepsilon} \bigl(W(t+\varepsilon,x)-W(t-\varepsilon,x)\bigr)\,.
\end{equation}
Proposition \ref{main variance inequality} shows in particular that the integral of this family of Gaussian processes converges to fractional Brownian motion.

While the first scheme doesn't seem to be easy to work with, the second one has been proved to be very suitable in our setting where we use the Wiener space technics and Malliavin calculus \cite{Nualart2}.

Now let first replace the fBM family $\{W(\cdot,x)\}_{x \in \mathds{Z}^d}$ in equation \eqref{e1} by a family of absolutely continuous functions $\{\Xi(\cdot,x)\}_{x \in \mathds{Z}^d}$, or equivalently replace the family of fractional noises $\{\frac{\partial}{\partial t}W(\cdot,x)\}_{x \in \mathds{Z}^d}$ by a family of locally integrable functions $\{\xi(\cdot,x)\}_{x \in \mathds{Z}^d}$ where $\Xi(t,x)=\int_0^t\xi(s,x)\mathrm{d}s$ for every $x$ and $t$. Carmona and Molchanov in \cite{Carmona} showed that the Feynman-Kac formula
$$
\mathcal{F}(\Xi):=\mathbb{E}^x\Bigl[u_o(X(t))\exp \int_0^t \Xi\bigl(\mathrm{d}s ,X(t-s)\bigr)\Bigr]=\mathbb{E}^x\Bigl[u_o(X(t))\exp \int_0^t \xi(s,X(t-s)\mathrm{d}s\Bigr]
$$
solves the PAM driven by the potential $\{\xi(\cdot,x)\}_{x \in \mathds{Z}^d}$ if this expression is finite for every $x$ and $t$.

If we approximate every fractional Brownian motion $W(\cdot,x)$ by a family of stochastic processes $\{W^{\varepsilon}(\cdot,x)\}_{\varepsilon>0}$ which converge to $W(\cdot,x)$ and with the property that every $W^{\varepsilon}(\cdot,x)$ has absolutely continuous sample paths, we expect that $\mathcal{F}(W^{\varepsilon})$ should also converge $\mathcal{F}(W)$. On the other hand, if we denote by $u^{\varepsilon}$ the solution of equation \eqref{e1} with $W$ replaced by $W^{\varepsilon}$,  we also expect that $u^{\varepsilon}$ should converge to the solution of \eqref{e1} with the integral understood in the Stratonovich sense. The reason is that for the stochastic differential equations with Brownian motion or more generally semi-martingale terms, if the Brownian motions (semi-martingales) are approximated by a family of processes with absolutely continuous sample paths, the sequence of solutions converges to the Stratonovich solution of the original differential equation \cite{Stroock,Protter}. Note that for each sample path of an such processes, a solution in the classical sense exists.

So we consider the approximation scheme of equation \eqref{Derivative approximation formula}. In the rest of the paper, without any loss of generality we will assume that $\kappa=1$. We also denote by $\mathbb{E}$ the expectation with respect to the fractional Brownian field and by $\mathbb{E}^x$ the expectation with respect to the random walk $X(\cdot)$.

Let
\begin{equation}\label{FeynmanKac for epsilon}
u_\varepsilon(t,x):=\mathbb{E}^x\Bigl[u_o(X(t))\exp \int_0^t \dot{W_{\varepsilon}}\bigl(s,X(t-s)\bigr)\mathrm{d}s\Bigr]\,,
\end{equation}
where $\dot{W_{\varepsilon}}$ is defined in \eqref{Derivative approximation formula}.

By lemma \ref{uniform boundedness}, we have $\mathbb{E}|u_\varepsilon(t,x)|<\infty$ for every $x$ and $t$. So almost surely, $u_\varepsilon(t,x)$ is finite for every $x$ and $t$. On the other hand, the sample paths of $\dot{W_{\varepsilon}}$ are locally integrable. So by the above mentioned theorem of Carmona and Molchanov \cite{Carmona} the field $\{u_\varepsilon(t,x)\}_{x,t}$ solves the following equation
\begin{equation}\label{ApproxEqu}
    \left\{
        \begin{aligned}
        &\frac{\partial u_{\varepsilon}}{\partial t}=\boldsymbol{\Delta} u_{\varepsilon}+u_{\varepsilon} \dot{W_{\varepsilon}}\\
        &u_{\varepsilon}(0,x)=u_o(x)\,.
        \end{aligned}
    \right.
\end{equation}

We aim to show that \eqref{Feynman-Kac} gives a solution to \eqref{e1} with the Stratonovich integral $\int_0^t u(s,x)W(\mathrm{d}s, x)$ defined in the following natural manner which was also used in \cite{Nualart2}.
\begin{dfn}
For a random field $u=\{u(t,x)\;;\; t\in \mathds{R},x \in \mathds{Z}^d\}$, the Stratonovich integral
$$
\int_0^t u(s,x)W(\mathrm{d}s, x)
$$
is defined \cite{Nualart2} as the following $\mathcal{L}^2$ limit (if it exists)
$$
\lim_{\varepsilon \rightarrow 0}\int_0^t u(s,x)\dot{W_{\varepsilon}}\bigl(s, x\bigr)\mathrm{d}s\,.
$$
\end{dfn}
Using the same methodology of \cite{Nualart2} we will show that the Stratonovich integral of the Feynman-Kac formula \eqref{Feynman-Kac} exists and moreover it satisfies \eqref{e1}.

Indeed equation \eqref{ApproxEqu} can be integrated to
\begin{equation}\label{PA equation for epsilon}
u_\varepsilon(t,x)- u_o(x)=\int_0^t \boldsymbol{\Delta} u_\varepsilon (s,x)\mathrm{d}s + \int_0^t u_\varepsilon(s,x)\dot{W_{\varepsilon}}(s, x)\mathrm{d}s\,.
\end{equation}
Once we show that $u_{\varepsilon}$ (given by \eqref{FeynmanKac for epsilon}) converges to $u$ (given by \eqref{Feynman-Kac}) in  $\mathcal{L}^2$ sense and uniformly in $t \in [0,T]$ as  $\varepsilon$ goes down to zero, along with equation \eqref{PA equation for epsilon}, it would imply the $\mathcal{L}^2$-convergence of $\int \bigl(u_{\varepsilon}\dot{W_{\varepsilon}} \bigr)$ to some random variable. If moreover one shows that $\int \bigl(u_{\varepsilon}\dot{W_{\varepsilon}}-u\dot{W_{\varepsilon}} \bigr)$ converges in $\mathcal{L}^2$ to zero, it would imply the convergence of $\int \bigl(u\dot{W_{\varepsilon}} \bigr)$ and hence the existence of the Stratonovich integral $\int u \,\mathrm{d}W$. But this means that $u$ satisfies equation \eqref{e1}.

Let $g_{s,x}^\varepsilon$ be defined as in equation \eqref{gEpsilon}. So we have
$\mathbf{W}(g_{s,x}^\varepsilon)= \dot{W}_\varepsilon(s,x)$ and by the change of variable formula \eqref{change of variable} we obtain
$$
\begin{aligned}
&u_{\varepsilon}(s,x) \dot{W_{\varepsilon}}(s,x)-u(s,x)\dot{W_{\varepsilon}}(s,x) = \tilde{u}_\varepsilon(s,x) \mathbf{W}(g^{\varepsilon}_{s,x})\\
& \qquad = \boldsymbol{\delta}(\tilde{u}_\varepsilon(s,x) g^{\varepsilon}_{s,x} ) + \langle \nabla \tilde{u}_\varepsilon(s,x), g^{\varepsilon}_{s,x}\rangle_\mathcal{H}\,,
\end{aligned}
$$
where $\tilde{u}_\varepsilon:=u_{\varepsilon}-u$.

Hence it suffices to show that $V_{1,\varepsilon} := \int_0^t \boldsymbol{\delta}(\tilde{u}_\varepsilon(s,x) g^{\varepsilon}_{s,x} ) \mathrm{d}s$  and $V_{2,\varepsilon} := \int_0^t \langle \nabla \tilde{u}_\varepsilon(s,x), g^{\varepsilon}_{s,x}\rangle_\mathcal{H} \mathrm{d}s$ both converge to zero as $\varepsilon$ goes to zero. In sections \ref{convergence of u_epsilon}, \ref{convergence of v_1} and \ref{convergence of v_2} we will deal with the convergence of $u_\varepsilon$, $V_{1,\varepsilon}$ and $V_{2,\varepsilon}$.

\section{Approximation rate}\label{Approximation rate}
In this section we prove the following theorem that establishes the approximation of $W(\mathrm{d}s)$ by $\dot{W_{\varepsilon}}(s)\mathrm{d}s$. In the proof we will use some ideas of \cite{Nualart2} as well as simple properties of random walk.
\begin{prop}\label{main variance inequality}
Let $t$, $T$, $t_1$, $t_2$, ..., $t_N$ be some positive real numbers with $t_0:=0<t_1 < \dotsb < t_{N}<t_{N+1}:=t \leq T$ and $X(\cdot)$ a jump function on $[0,t]$ with values in $\mathds{Z}^d$ and jump times $\{t_1, ..., t_{N}\}$, i.e. $X(s)=x_i \in \mathds{Z}^d$ for $s \in (t_i, t_{i+1}]$. Then
\begin{equation*}
\mathbb{E}\Bigl|\int_0^t \dot{W_{\varepsilon}}\bigl(s,X(s)\bigr)\mathrm{d}s - \int _0^t W\bigl(\mathrm{d}s, X(s)\bigr)\Bigr|^2 \leq C N^2 \varepsilon^{\min\{2H,1\}}\,,
\end{equation*}
where $C$ is a constant depending only on $T$ and $H$ and
\begin{equation*}
\int _0^t W\bigl(\mathrm{d}s, X(s)\bigr)=\sum_{i=0}^N\bigl(W(t_{i+1},x_i)-W(t_i,x_i)\bigr)\,.
\end{equation*}
\end{prop}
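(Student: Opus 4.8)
The plan is to decompose the integral difference into a sum over the $N+1$ constant pieces of the jump function $X(\cdot)$, reduce to a single-site estimate, and then bound that single-site quantity using the self-similarity and Hölder-type properties of fractional Brownian motion. On each interval $(t_i, t_{i+1}]$ the function $X$ takes the constant value $x_i$, so both integrals split along the partition, and using the identity stated in the proposition for the "stochastic integral" (which is really a telescoping sum) I can write the total difference as a sum of $N+1$ terms, each of the form $\int_{t_i}^{t_{i+1}} \dot{W}_\varepsilon(s,x_i)\,\mathrm{d}s - \bigl(W(t_{i+1},x_i)-W(t_i,x_i)\bigr)$. Since the processes $W(\cdot,x_i)$ at distinct sites are independent but the same site may recur, I should be careful: the cleanest route is to apply the triangle inequality in $\mathcal{L}^2$ first (at the cost of the factor $N$), reducing the whole problem to bounding the $\mathcal{L}^2$-norm of a single term.

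First I would analyze the single-interval error. Writing $\dot{W}_\varepsilon(s,x)=\frac{1}{2\varepsilon}\bigl(W(s+\varepsilon,x)-W(s-\varepsilon,x)\bigr)$ and integrating in $s$ over $[a,b]$ (with $a=t_i$, $b=t_{i+1}$), Fubini lets me rewrite $\int_a^b \dot{W}_\varepsilon(s,x)\,\mathrm{d}s$ as an average of increments of $W$; the difference with the genuine increment $W(b,x)-W(a,x)$ then reduces to expressions of the form $\frac{1}{2\varepsilon}\int$ of small increments of $W$ near the endpoints $a$ and $b$, on scale $\varepsilon$. The key is that this error is controlled purely by the local behavior of $W$ over intervals of length $O(\varepsilon)$. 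I would then compute the second moment explicitly using the covariance $R_H$: the $\mathcal{L}^2$-norm of a boundary term like $\frac{1}{2\varepsilon}\int_{a-\varepsilon}^{a+\varepsilon}\bigl(W(r,x)-W(a,x)\bigr)\,\mathrm{d}r$ expands into a double integral of $R_H$ increments, and by stationarity of increments and self-similarity ($\mathbb{E}|W(u,x)-W(v,x)|^2=|u-v|^{2H}$) this scales like $\varepsilon^{2H}$ when $H<1/2$ and like $\varepsilon$ when $H\geq 1/2$, giving the $\varepsilon^{\min\{2H,1\}}$ exponent.

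The cleanest formulation uses the Hilbert-space picture already set up in the Preliminaries: the single-interval error is $\mathbf{W}(h)$ for an explicit element $h\in\mathcal{H}$, namely a difference of the relevant $g^\varepsilon_{s,x}$-type averages against an indicator, so its second moment equals $\|h\|_{\mathcal{H}}^2$, and the whole estimate becomes a deterministic bound on $\mathcal{H}$-norms via the inner product formula $\langle \mathbf{1}_{[0,t]\times\{x\}},\mathbf{1}_{[0,s]\times\{x\}}\rangle_{\mathcal{H}}=R_H(t,s)$. I would carry out the $R_H$ bookkeeping here, keeping track of the cases $H<1/2$ and $H\geq1/2$ separately when estimating the increment variances; the constant $C$ absorbing the dependence on $T$ (through the total time horizon bounding the number and location of endpoints) and on $H$.

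\textbf{Main obstacle.} The delicate point is the boundary analysis on each subinterval. Naively bounding $\mathbb{E}\bigl|\int_a^b\dot{W}_\varepsilon(s,x)\,\mathrm{d}s - (W(b,x)-W(a,x))\bigr|^2$ risks producing a bound of order $\varepsilon^{2H}$ with a constant depending on the interval length $b-a$ in a way that does not sum nicely, or losing the sharp exponent when $H$ is small. The careful step is to see that the error is genuinely \emph{local} at the two endpoints—the interior contribution cancels after the Fubini rearrangement—so that only $\varepsilon$-scale increments of $W$ survive; this localization is what yields a bound independent of $b-a$ and produces exactly $\varepsilon^{\min\{2H,1\}}$ rather than a worse power. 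Getting the constant to depend only on $T$ and $H$ (and not on $N$ beyond the explicit $N^2$) hinges on this localization together with the crude $\mathcal{L}^2$ triangle inequality contributing one factor of $N$ and the Cauchy–Schwarz / independence structure contributing the second.
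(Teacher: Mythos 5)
Your proposal is correct, and although its outer skeleton coincides with the paper's (both reduce to a single constant piece of $X(\cdot)$ by the $\mathcal{L}^2$ triangle inequality, which is exactly where the factor $(N+1)^2\leq 4N^2$ comes from), your treatment of the single-interval estimate is genuinely different from the paper's. The paper proves the one-interval bound \eqref{piecewise} by expanding the second moment into the three terms of \eqref{piecewise terms} and showing, through a lengthy case analysis ($\Delta\geq 0$, $-t<\Delta<0$, $\Delta\leq -t$, separately for $H$ below and above $1/2$), that $\mathfrak{S}_2$ and $\mathfrak{S}_3$ each converge to $|t_2-t_1|^{2H}$; bounding these two deviations separately in absolute value is what produces the exponent $\min\{2H,1\}$. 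Your Fubini rearrangement instead exploits the cancellation between those terms before any absolute values are taken: from
$$
\int_a^b \dot{W_{\varepsilon}}(s)\,\mathrm{d}s=\frac{1}{2\varepsilon}\int_{b-\varepsilon}^{b+\varepsilon}W(u)\,\mathrm{d}u-\frac{1}{2\varepsilon}\int_{a-\varepsilon}^{a+\varepsilon}W(u)\,\mathrm{d}u
$$
(an identity valid for oriented integrals regardless of whether $b-a\geq 2\varepsilon$, so no case split on the interval length is needed), the single-interval error equals $\frac{1}{2\varepsilon}\int_{b-\varepsilon}^{b+\varepsilon}\bigl(W(u)-W(b)\bigr)\mathrm{d}u-\frac{1}{2\varepsilon}\int_{a-\varepsilon}^{a+\varepsilon}\bigl(W(u)-W(a)\bigr)\mathrm{d}u$, and each endpoint term has $\mathcal{L}^2$ norm at most $\varepsilon^{H}$ by Minkowski's integral inequality and $\mathbb{E}|W(u)-W(c)|^2=|u-c|^{2H}$. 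One correction to your own forecast, though a harmless one: this localization does \emph{not} yield ``$\varepsilon^{2H}$ for $H<1/2$ and $\varepsilon$ for $H\geq 1/2$.'' By stationarity of increments and self-similarity the endpoint terms scale exactly like $\varepsilon^{2H}$ for \emph{every} $H\in(0,1)$, so your argument gives the uniform bound $CN^2\varepsilon^{2H}$, which is stronger than the stated $CN^2\varepsilon^{\min\{2H,1\}}$ whenever $\varepsilon\leq 1$ (the regime of interest; for $\varepsilon\geq 1$ the statement follows from crude variance bounds, and the paper's own proof also implicitly restricts to small $\varepsilon$). In other words, the exponent $\min\{2H,1\}$ is an artifact of the paper's method, not of the quantity being estimated. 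What the paper's longer route buys is not the proposition itself but its by-products: the intermediate estimates \eqref{difference bound for H smaller than half}, \eqref{difference bound for H larger than half} and \eqref{alternative difference bound for H larger than half} on $\mathfrak{S}_2=\var\bigl(\int\dot{W_{\varepsilon}}\bigr)$ are reused later, in lemma \ref{uniform boundedness} and in section \ref{convergence of v_1}, so if the paper adopted your shorter proof those auxiliary bounds would still have to be derived separately.
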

\begin{proof}
First we show that for every $t_1$ and $t_2$, $t_1<t_2\leq T$, and any fractional Brownian motion $W(\cdot)$ with Hurst parameter $H \in (0,1)$ we have
\begin{equation}\label{piecewise}
\mathbb{E}\Bigl|W(t_2) - W(t_1) -\int_{t_1}^{t_2} \dot{W_{\varepsilon}}(\theta)\mathrm{d}\theta \Bigr|^2 \leq C \varepsilon^{\min\{2H,1\}}\,,
\end{equation}
where $\dot{W_{\varepsilon}}$ is the symmetric $\varepsilon$-derivative of $W$:
\begin{equation*}
\dot{W_{\varepsilon}}(t):=\frac{1}{2\varepsilon} \bigl(W(t+\varepsilon)-W(t-\varepsilon)\bigr)
\end{equation*}
and $C$ is some positive constant depending only on $T$ and $H$.
We have to calculate and bound
\begin{equation}\label{piecewise terms}
\begin{split}
& \mathbb{E}\Bigl|W(t_2) - W(t_1) -\int_{t_1}^{t_2} \dot{W_{\varepsilon}}(\theta)\mathrm{d}\theta \Bigr|^2 = \mathbb{E}\Bigl|W(t_2)-W(t_1)\Bigr|^2 \\
& \qquad + \int_{t_1}^{t_2} \int_{t_1}^{t_2} \mathbb{E}\Bigl[\dot{W_{\varepsilon}}(\theta)\dot{W_{\varepsilon}}(\eta)\Bigr]\mathrm{d}\theta \,\mathrm{d}\eta -2 \int_{t_1}^{t_2} \mathbb{E}\Bigl[\bigl(W({t_2})-W({t_1})\bigr)\dot{W_{\varepsilon}}(\theta)\Bigl]\mathrm{d}\theta\,.
\end{split}
\end{equation}
Let $\mathfrak{S}_1$ and $\mathfrak{S}_2$ be the first and second terms on the right hand side of this equation and $\mathfrak{S}_3$ be the third term without its $-2$ factor.\\
Using the following equality
\begin{equation*}
\mathbb{E}\Bigl[\bigl(W(a) - W(b)\bigr)\bigl(W(c) - W(d)\bigr) \Bigr] = \frac{1}{2} \Bigl[|a-d|^{2H}+|b-c|^{2H}-|a-c|^{2H}-|b-d|^{2H}\Bigr]
\end{equation*}
we have:
\begin{equation*}
\mathfrak{S}_1=|t_2-t_1|^{2H},
\end{equation*}
\begin{equation*}
\mathfrak{S}_2=\int_{t_1}^{t_2} \int_{t_1}^{t_2} \frac{1}{8\varepsilon^2}\Bigl[|s-\eta+2\varepsilon|^{2H}+|\eta-s+2\varepsilon|^{2H}-
2|s-\eta|^{2H}\Bigr]\mathrm{d}\eta \,\mathrm{d}s
\end{equation*}
and
\begin{equation*}
\mathfrak{S}_3=\frac{1}{4\varepsilon} \int_{t_1}^{t_2} \Bigl[|t_2-\theta + \varepsilon|^{2H}+
|\theta - t_1 + \varepsilon|^{2H}-
|t_2-\theta - \varepsilon|^{2H}-
|\theta-t_1-\varepsilon|^{2H}\Bigr]\mathrm{d}\theta\,.
\end{equation*}
We will show that both $\mathfrak{S}_2$ and $\mathfrak{S}_3$ converge to $|t_2-t_1|^{2H}$.\\
\textbf{Step I: Limiting behavior of $\mathfrak{S}_2$}

By a change of variable we can replace the integration interval with $[0,t_2-t_1]$ with the integrand remaining intact. But as the integrand is symmetric in $s$ and $\eta$, we may calculate the integral over a triangular surface hence getting:
\begin{equation*}
\mathfrak{S}_2=\frac{2}{8\varepsilon^2}\int_0^{t_2-t_1} \int_0^{s} \Bigl[|s-\eta+2\varepsilon|^{2H}+|\eta-s+2\varepsilon|^{2H}-
2|s-\eta|^{2H}\Bigr]\mathrm{d}\eta \,\mathrm{d}s\,.
\end{equation*}
By a change of variable of $\gamma=s-\eta$ we get:
\begin{equation}\label{term2}
\mathfrak{S}_2=\frac{1}{4\varepsilon^2}\int_0^{t_2-t_1} \int_0^{s} \Bigl[|\gamma+2\varepsilon|^{2H}+|\gamma-2\varepsilon|^{2H}-
2|\gamma|^{2H}\Bigr]\mathrm{d}\gamma \,\mathrm{d}s\,.
\end{equation}
We will show that $\mathfrak{S}_2$ converges to $|t_2-t_1|^{2H}$ with the following rate of convergence for $H<\frac{1}{2}$
\begin{equation}\label{difference bound for H smaller than half}
\left|\mathfrak{S}_2-|t_2-t_1|^{2H}\right|\leq 4(2\varepsilon)^{2H}
\end{equation}
and
\begin{equation}\label{difference bound for H larger than half}
\left|\mathfrak{S}_2-|t_2-t_1|^{2H}\right|\leq C \varepsilon
\end{equation}
for $H>\frac{1}{2}$. Here $C$ is some constant depending only on $T$ and $H$ .
For the simplicity of notation let $t:=t_2-t_1$. Defining $g(s):=\int_0^s |r|^{2 H}\mathrm{d}r$, \eqref{term2} can be written as:
\begin{equation}\label{first piece 1}
\mathfrak{S}_2=\frac{1}{4\varepsilon^2}\int_0^t\left[g(s+2\varepsilon)+
g(s-2\varepsilon)-2g(s)\right]\mathrm{d}s\,.
\end{equation}
As $g'$ is continuous everywhere and $g''(r)=2H\,\sgn(r) |r|^{2H-1}$ is continuous everywhere except for the origin when $H<\frac{1}{2}$ and everywhere when $H \geq \frac{1}{2}$, this equation can be written as:
\begin{equation}\label{first piece 2}
\mathfrak{S}_2=\frac{1}{4}\int_{-1}^1\int_{-1}^1\int_0^t g''(s+\xi \varepsilon+\eta \varepsilon)\mathrm{d}s\,\mathrm{d}\xi\,\mathrm{d}\eta\,.
\end{equation}

Let $\Delta:=\xi\varepsilon+\eta\varepsilon$ and first suppose that $H<\frac{1}{2}$.\\
Case i) $\Delta \geq 0$:
\begin{equation*}
    \begin{split}
        \left| \int_0^t \bigl( g''(s+\Delta)-2Hs^{2H-1}\bigr)\mathrm{d}s\right|&=2H\int_0^t \bigl(s^{2H-1}-{(s+\Delta)}^{2H-1}\bigr)\mathrm{d}s\\
        & = \left[t^{2H} - (t+\Delta)^{2H}\right]+\Delta^{2H} \leq \Delta^{2H}\,.
    \end{split}
\end{equation*}
Case ii) $-t < \Delta < 0$: %As $\epsilon$ is going down to zero we may assume that $-\Delta<t$.
\begin{equation}\label{case ii}
\begin{split}
\int_0^t \bigl( g''(s+\Delta)-2Hs^{2H-1}\bigr)\mathrm{d}s=
&-2H \int_0^{-\Delta}\bigl((-s-\Delta)^{2H-1}+s^{2H-1}\bigr)\mathrm{d}s\\
&\quad +2H \int_{-\Delta}^t \bigl( (s+\Delta)^{2H-1} - s^{2H-1} \bigr)\mathrm{d}s\,.
\end{split}
\end{equation}
The first term equals $-2|\Delta|^{2H}$ and the second term equals $(t+\Delta)^{2H}-t^{2H}+\Delta^{2H}$ which is bounded by $2|\Delta|^{2H}$.\\
Case iii) $\Delta \leq -t$:
\begin{equation}\label{case iii}
    \begin{split}
        &\left| \int_0^t \bigl( g''(s+\Delta)-2Hs^{2H-1}\bigr)\mathrm{d}s\right|
         = 2H \int_0^{t}\bigl((-s-\Delta)^{2H-1}+s^{2H-1}\bigr)\mathrm{d}s\\
        & \qquad \qquad \qquad \leq 2H \int_0^{-\Delta}\bigl((-s-\Delta)^{2H-1}+s^{2H-1}\bigr)\mathrm{d}s= 2|\Delta|^{2H}\,.
    \end{split}
\end{equation}
Noting that $|\Delta|<2\varepsilon$, inequality \eqref{difference bound for H smaller than half} is proved.

Now we consider the case of $H\geq\frac{1}{2}$. \\%Again we have to distinguish between $\Delta$ positive and negative: \\
Case i) $\Delta\geq 0$:
\begin{equation}\label{case i}
\begin{split}
        \int_0^t \bigl( g''(s+\Delta)-2Hs^{2H-1}\bigr)\mathrm{d}s
        &=2H\int_0^t \bigl({(s+\Delta)}^{2H-1}-s^{2H-1}\bigr)\mathrm{d}s\\
        &=2H\int_0^t \int_0^{\Delta}(2H-1)(s+\alpha)^{2H-2}\mathrm{d}\alpha\,\mathrm{d}s\\
        &=2H \int_0^{\Delta} \bigl( (t+\alpha)^{2H-1}-\alpha^{2H-1}\bigr)\mathrm{d}\alpha\,.
\end{split}
\end{equation}
As $2H-1<1$ we have $(t+\alpha)^{2H-1}-\alpha^{2H-1}\leq t^{2H-1}$ which shows that the above integral is bounded by $2H t^{2H-1} |\Delta| $ and hence by $2H T^{2H-1} |\Delta| $.\\
Case ii) $-t < \Delta < 0$: Equation \eqref{case ii} remains valid with its first term bounded by $2|\Delta|^{2H}$ which is smaller than $2|\Delta|$, assuming $|\Delta|<1$. As $2H-1>0$, the absolute value of the second term equals:
\begin{equation*}
\begin{split}
&2H \int_{-\Delta}^t \bigl(  s^{2H-1} - (s+\Delta)^{2H-1} \bigr)\mathrm{d}s
= 2H \int_\Delta^0 \int_{-\Delta}^t  (s+\alpha)^{2H-2}(2H-1) \mathrm{d}s \, \mathrm{d}\alpha \\
&\qquad \qquad \qquad = 2H \int_{\Delta}^0 \bigl[  (\alpha+t)^{2H-1} - (-\Delta + \alpha)^{2H-1} \bigr]\mathrm{d}\alpha\\
& \qquad \qquad \qquad \leq 2H \int_{\Delta}^0 (t+\Delta)^{2H-1} \leq 2H  t^{2H-1} |\Delta|\leq 2H  T^{2H-1} |\Delta|\,.
\end{split}
\end{equation*}
The last inequality is true because $2H-1<1$. So we get the bound $(2+2H  T^{2H-1}) |\Delta|$.\\
Case iii) $\Delta \leq -t$: Equation \eqref{case iii} works without any change and we get the bound $2|\Delta|^{2H} \leq 2|\Delta|$.

Noting $|\Delta|\leq 2\varepsilon$ the proof of inequality  \eqref{difference bound for H larger than half} is complete with $C=2^{2H}(2+2H  T^{2H-1})$.\\

In the $H\geq\frac{1}{2}$ regime we can establish the following alternative bound which will be used in section \ref{convergence of u_epsilon}
\begin{equation}\label{alternative difference bound for H larger than half}
\left|\mathfrak{S}_2-|t_2-t_1|^{2H}\right|\leq 2|t_2-t_1| (2H+1) \varepsilon^{2H-1}\,.
\end{equation}

It is shown case by case
\begin{itemize}
  \item For case i), using the first equality in equation \eqref{case i} and noting $(s+\Delta)^{2H-1}-s^{2H-1}\leq \Delta^{2H-1}$ we have the bound $2 H t \Delta^{2H-1}$.
  \item For case ii), the second term on the right hand side in \eqref{case ii} can be bounded by $2H(t-|\Delta|)|\Delta|^{2H-1} \leq 2H t |\Delta|^{2H-1}$ and the first term by $2|\Delta|^{2H} \leq 2 t |\Delta|^{2H-1}$.
  \item In case iii),  using the first equality in \eqref{case iii} it can be bounded by $4H t |\Delta|^{2H-1} $.
\end{itemize}
So we have the bound $2t (2H+1) |\Delta|^{2H-1} \leq 2t (2H+1) \varepsilon^{2H-1}$.\\
\textbf{Step II: Limiting behavior of $\mathfrak{S}_3$}

By setting $t:= t_2-t_1$ and two changes of variables, $\mathfrak{S}_3$ can be written as
\begin{equation*}
\frac{2}{4\varepsilon} \int_{0}^{t} \Bigl(|\theta + \varepsilon|^{2H}-
|\theta - \varepsilon|^{2H}\Bigr)\mathrm{d}\theta
           =\frac{1}{2\varepsilon} \int_{0}^{t} \int_{-\varepsilon}^{+\varepsilon} 2H \, |\theta + \alpha|^{2H-1} \mathrm{d}\alpha \,\mathrm{d}\theta\,.
\end{equation*}
So
\begin{equation}\label{3}
(\mathfrak{S}_3-t^{2H}) =
\frac{1}{2\varepsilon} \int_{-\varepsilon}^{+\varepsilon} \int_{0}^{t}  2H \bigl( |\theta + \alpha|^{2H-1}- \theta^{2H-1}\bigr) \mathrm{d}\theta \,\mathrm{d}\alpha\,.
\end{equation}

Let's first assume $\varepsilon\leq t$. Let's break this integral into three sub-integrals:
\begin{equation*}
\int_{0}^{+\varepsilon} \int_{0}^{t}\dotsb
+ \int_{-\varepsilon}^{0} \int_{0}^{-\alpha}\dotsb
+ \int_{-\varepsilon}^{0} \int_{-\alpha}^{t}\dotsb
\end{equation*}
and call them $A$, $B$ and $C$, respectively.\\
We bound these terms separately for $H \leq \frac{1}{2}$ and $H > \frac{1}{2}$.

First suppose $H \leq \frac{1}{2}$.
\begin{equation}\label{A when H less than half}
\begin{split}
|A| & = \frac{1}{2\varepsilon}  \int_{0}^{+\varepsilon} \int_{0}^{t}   2H \bigl[ \theta^{2H-1} - (\theta + \alpha)^{2H-1}\bigr] \mathrm{d}\theta \,\mathrm{d}\alpha \\
& = \frac{1}{2\varepsilon}  \int_{0}^{+\varepsilon} \bigl[ \alpha^{2H} - (\alpha + t)^{2H} + t^{2H}\bigr] \mathrm{d}\alpha \\
& \leq \frac{1}{2\varepsilon}  \int_{0}^{+\varepsilon} \alpha^{2H} \mathrm{d}\alpha \;
= \frac{1}{2(2H+1)}\varepsilon^{2H}\,.
\end{split}
\end{equation}
For the second term we have
\begin{equation*}
\begin{split}\label{B}
|B| & \leq \frac{1}{2\varepsilon}  \int_{-\varepsilon}^{0} \int_{0}^{-\alpha}   2H \bigl[ \theta^{2H-1} +(-\theta - \alpha)^{2H-1}\bigr] \mathrm{d}\theta \,\mathrm{d}\alpha \\
& = \frac{1}{\varepsilon}  \int_{-\varepsilon}^{0} (-\alpha)^{2H} \mathrm{d}\alpha = \frac{1}{2H+1}\varepsilon^{2H}\,.
\end{split}
\end{equation*}
Finally:
\begin{equation*}
\begin{split}
|C| & = \frac{1}{2\varepsilon}  \int_{-\varepsilon}^{0} \int_{-\alpha}^{t}   2H \bigl[ (\theta + \alpha)^{2H-1} - \theta^{2H-1}\bigr] \mathrm{d}\theta \,\mathrm{d}\alpha \\
& = \frac{1}{2\varepsilon}  \int_{-\varepsilon}^{0} \bigl[(t+\alpha)^{2H}-t^{2H}+(-\alpha)^{2H}\bigr] \mathrm{d}\alpha \\
& \leq \frac{1}{2\varepsilon}  \int_{-\varepsilon}^{0} (-\alpha)^{2H} \mathrm{d}\alpha \;
= \frac{1}{2(2H+1)}\varepsilon^{2H}\,.
\end{split}
\end{equation*}
So for $H \leq \frac{1}{2}$:
\begin{equation*}
|\mathfrak{S}_3-t^{2H}|\leq \frac{2}{2H+1}\varepsilon^{2H}.
\end{equation*}

Now for $H > \frac{1}{2}$: we again examine each of the terms:
\begin{equation}\label{A when H larger than half}
\begin{split}
|A| & = \frac{1}{2\varepsilon}  \int_{0}^{+\varepsilon} \int_{0}^{t}   2H \bigl[ (\theta + \alpha)^{2H-1} - \theta^{2H-1}\bigr] \mathrm{d}\theta \,\mathrm{d}\alpha \\
& = \frac{H}{\varepsilon}  \int_{0}^{+\varepsilon} \int_{0}^{t} \int_{0}^{\alpha}  (2H-1) (\theta + \xi)^{2H-2}\mathrm{d}\xi \,\mathrm{d}\theta \,\mathrm{d}\alpha \\
& = \frac{H}{\varepsilon}  \int_{0}^{+\varepsilon} \int_{0}^{\alpha} \bigl[ (t + \xi)^{2H-1} - {\xi}^{2H-1}\bigr]\mathrm{d}\xi \,\mathrm{d}\alpha \\
& \leq \frac{H}{\varepsilon}  \int_{0}^{+\varepsilon} \int_{0}^{\alpha}t^{2H-1} \mathrm{d}\xi \,\mathrm{d}\alpha \; = \frac{1}{2}H t^{2H-1}\varepsilon\,.
\end{split}
\end{equation}
As equation \eqref{B} remains valid for $H > \frac{1}{2}$, we have:
\begin{equation*}
|B|\leq\frac{1}{2H+1}\varepsilon^{2H} \leq \frac{1}{2H+1}\varepsilon\,.
\end{equation*}
For $|C|$ we use the same trick as in \eqref{A when H larger than half}:
\begin{equation}\label{C when H larger than half}
\begin{split}
|C|
&= \frac{1}{2\varepsilon} \int_{-\varepsilon}^{0} \int_{-\alpha}^{t}
2H \bigl[\theta^{2H-1}-(\theta + \alpha)^{2H-1}\bigr] \mathrm{d}\theta \,\mathrm{d}\alpha \\
&= \frac{H}{\varepsilon} \int_{-\varepsilon}^{0} \int_0^{-\alpha} \int_{-\alpha}^{t}
(2H-1)(\theta + \xi)^{2H-2} \, \mathrm{d}\theta \,\mathrm{d}\xi \,\mathrm{d}\alpha \\
&= \frac{H}{\varepsilon} \int_{-\varepsilon}^{0} \int_0^{-\alpha}
\bigl[(t+\xi)^{2H-1}-(\xi -\alpha)^{2H-1}\bigr] \mathrm{d}\xi \,\mathrm{d}\alpha \\
&\leq \frac{H}{\varepsilon} \int_{-\varepsilon}^{0} \int_0^{-\alpha}
(t +\alpha)^{2H-1} \mathrm{d}\xi \,\mathrm{d}\alpha \\
&\leq \frac{H}{\varepsilon} \int_{-\varepsilon}^{0} \int_0^{-\alpha}
t ^{2H-1} \mathrm{d}\xi \,\mathrm{d}\alpha \; = \frac{1}{2}H t^{2H-1}\varepsilon\,.
\end{split}
\end{equation}
Now we address the case where $\varepsilon > t$. Here we need to break the integral in \eqref{3} into three sub-integrals:
\begin{equation*}
\int_{0}^{+\varepsilon} \int_{0}^{t}\dotsb
+ \int_{-t}^{0} \int_{0}^{-\alpha}\dotsb
+ \int_{-t}^{0} \int_{-\alpha}^{t}\dotsb
+ \int_{-\varepsilon}^{-t} \int_{0}^{t}\dotsb
\end{equation*}
Let's call the terms as $A'$, $B'$, $C'$, $D'$, respectively.\\

One can check easily that the same procedures used for bounding $A$ and $C$ work for $A'$ and $C'$. For $B'$ and $D'$ we have
\begin{equation*}
|B'| \leq \frac{1}{2\varepsilon}  \int_{-t}^{0} \int_{0}^{-\alpha}   2H \bigl[ \theta^{2H-1} +(-\theta - \alpha)^{2H-1}\bigr] \mathrm{d}\theta \,\mathrm{d}\alpha\,,
\end{equation*}
and
\begin{equation*}
\begin{split}
|D'| & \leq \frac{1}{2\varepsilon}  \int_{-\varepsilon}^{-t} \int_{0}^{t}   2H \bigl[ \theta^{2H-1} +(-\theta - \alpha)^{2H-1}\bigr] \mathrm{d}\theta \,\mathrm{d}\alpha \\
& \leq \frac{1}{2\varepsilon}  \int_{-\varepsilon}^{-t} \int_{0}^{-\alpha}   2H \bigl[ \theta^{2H-1} +(-\theta - \alpha)^{2H-1}\bigr] \mathrm{d}\theta \,\mathrm{d}\alpha\,.
\end{split}
\end{equation*}
Hence
\begin{equation*}
|B'|+|D'| \leq |B|\,.
\end{equation*}

So in brief the same bounds found above for $|\mathfrak{S}_3-t^{2H}|$ for the case $\varepsilon\leq t$ remain valid for the case $\varepsilon > t$ too. So inequality \eqref{piecewise} is proved.

Now we turn back to the proof of proposition \ref{main variance inequality}. we have:
\begin{equation*}
\begin{split}
&\mathbb{E}\Bigl|\int_0^t \dot{W_{\varepsilon}}\bigl(s,X(s)\bigr)\mathrm{d}s - \int _0^t W\bigl(\mathrm{d}s, X(s)\bigr)\Bigr|^2\\
& \qquad \qquad \leq \mathbb{E}\Bigl\{\Bigl(\sum_{i=0}^N\Bigl|W(t_{i+1}) - W(t_i) -\int_{t_i}^{t_{i+1}} \dot{W_{\varepsilon}}(\theta)\mathrm{d}\theta \Bigr|\Bigl)^2\Bigl\}\\
& \qquad \qquad \leq C_1 (N+1)^2 \varepsilon^{\min\{2H,1\}}
\leq C_2 N^2 \varepsilon^{\min\{2H,1\}}\,.
\end{split}
\end{equation*}
\end{proof}

\section{Convergence of $u_\varepsilon$ }\label{convergence of u_epsilon}
In this section, using simple random walk properties we prove that $\tilde{u}_\varepsilon$ and its Malliavin derivative both converge to zero in $\mathcal{L}^2$.
\begin{prop}\label{theorem on convergence of u_epsilon}
$\tilde{u}_\varepsilon:=u_\varepsilon-u$ converges to $0$ in $\mathbb{D}^{1,2}$ uniformly in $[0,T]$, i.e.
\begin{equation*}
\sup_{s\in[0,T]} \mathbb{E}\bigl(|\tilde{u}_\varepsilon(s,x)|^2+\|\nabla \tilde{u}_\varepsilon(s,x)\|_{\mathcal{H}}^2\bigr) \longrightarrow 0 \qquad \text{as} \quad \epsilon \downarrow 0\,.
\end{equation*}
\end{prop}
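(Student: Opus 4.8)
The plan is to condition on the random walk and reduce everything to Gaussian computations controlled by Proposition \ref{main variance inequality}. For a frozen jump path $X$ the exponents appearing in $u_\varepsilon$ and $u$ are the centered Gaussian variables $A_\varepsilon:=\mathbf{W}(g^{\varepsilon,X}_{s,x})=\int_0^s\dot W_\varepsilon(\theta,X(s-\theta))\,\mathrm{d}\theta$ and $A:=\mathbf{W}(g^{X}_{s,x})=\int_0^s W(\mathrm{d}\theta,X(s-\theta))$, so that $\tilde u_\varepsilon(s,x)=\mathbb{E}^x[u_o(X(s))(e^{A_\varepsilon}-e^A)]$. Using Minkowski's inequality to pull the $\mathcal{L}^2(\mathbb{E})$-norm inside $\mathbb{E}^x$ and the boundedness of $u_o$, the value part reduces to estimating $\mathbb{E}^x\bigl[(\mathbb{E}|e^{A_\varepsilon}-e^A|^2)^{1/2}\bigr]$. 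For the conditional (in $X$) estimate I would combine $|e^a-e^b|\le|a-b|(e^a+e^b)$ with Cauchy--Schwarz and the Gaussian identities $\mathbb{E}|A_\varepsilon-A|^4=3(\mathbb{E}|A_\varepsilon-A|^2)^2$ and $\mathbb{E}e^{4A}=e^{8\mathrm{Var}(A)}$ to obtain
\[
\mathbb{E}|e^{A_\varepsilon}-e^A|^2\;\le\; C\,\bigl(\mathbb{E}|A_\varepsilon-A|^2\bigr)\,e^{c\max\{\mathrm{Var}(A),\mathrm{Var}(A_\varepsilon)\}},
\]
and then invoke Proposition \ref{main variance inequality}, applied to the frozen path with $N$ jumps, which gives $\mathbb{E}|A_\varepsilon-A|^2\le CN^2\varepsilon^{\min\{2H,1\}}$.

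For the Malliavin part I would first commute $\nabla$ with the random-walk expectation: since $\nabla$ is bounded as a map from $\mathbb{D}^{1,2}$ into $\mathcal{L}^2(\mathcal{H})$ (indeed $\|\nabla F\|_{\mathcal{L}^2(\mathcal{H})}\le\|F\|_{1,2}$), Lemma \ref{Commutativity of integration with operators} applies with $B=\mathbb{D}^{1,2}$, $B'=\mathcal{L}^2(\mathcal{H})$ and $\boldsymbol\Lambda=\nabla$, yielding $\nabla u_\varepsilon(s,x)=\mathbb{E}^x[u_o(X(s))e^{A_\varepsilon}g^{\varepsilon,X}_{s,x}]$ and $\nabla u(s,x)=\mathbb{E}^x[u_o(X(s))e^{A}g^{X}_{s,x}]$ once $\mathbb{E}^x\|u_o(X(s))e^{A}\|_{1,2}<\infty$ is checked. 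I would then write
\[
e^{A_\varepsilon}g^{\varepsilon,X}_{s,x}-e^{A}g^{X}_{s,x}=(e^{A_\varepsilon}-e^A)\,g^{\varepsilon,X}_{s,x}+e^{A}\,(g^{\varepsilon,X}_{s,x}-g^{X}_{s,x}),
\]
and bound the two $\mathcal{H}$-valued terms separately, using $\|g^{\varepsilon,X}_{s,x}\|_{\mathcal{H}}^2=\mathrm{Var}(A_\varepsilon)$ together with the previous estimate for the first, and $\|g^{\varepsilon,X}_{s,x}-g^{X}_{s,x}\|_{\mathcal{H}}^2=\mathbb{E}|A_\varepsilon-A|^2\le CN^2\varepsilon^{\min\{2H,1\}}$ (again Proposition \ref{main variance inequality}) with $\mathbb{E}e^{2A}=e^{2\mathrm{Var}(A)}$ for the second.

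The crux, and the step I expect to be the main obstacle, is a bound on the Gaussian variances $\mathrm{Var}(A)=\|g^{X}_{s,x}\|_{\mathcal{H}}^2$ and $\mathrm{Var}(A_\varepsilon)=\|g^{\varepsilon,X}_{s,x}\|_{\mathcal{H}}^2$ that grows slowly enough in the number of jumps $N$ to survive the outer expectation $\mathbb{E}^x$. Because $\langle\mathbf{1}_{[0,t]\times\{x\}},\mathbf{1}_{[0,s]\times\{y\}}\rangle_{\mathcal{H}}$ vanishes unless $x=y$, only the time the path spends at a common site contributes, so I would group the increments site by site. For $H\le\frac12$ the fBM increments over disjoint intervals are non-positively correlated, hence each site's contribution is at most the sum of $|I|^{2H}$ over the at most $N+1$ visited intervals $I$ of total length $\le T$; concavity of $r\mapsto r^{2H}$ then gives $\mathrm{Var}(A)\le(N+1)^{1-2H}T^{2H}$. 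For $H>\frac12$ the covariance has the positive density $H(2H-1)|u-v|^{2H-2}$, so the same-site double integrals are dominated by the integral over all of $[0,s]^2$, giving the $N$-independent bound $\mathrm{Var}(A)\le s^{2H}\le T^{2H}$; an analogous estimate, or a comparison through Proposition \ref{main variance inequality}, controls $\mathrm{Var}(A_\varepsilon)$ uniformly for small $\varepsilon$. In every case the relevant power $\gamma$ of $N+1$ is strictly below $1$, so $\mathbb{E}^x[P(N)e^{c(N+1)^{\gamma}}]<\infty$ for every polynomial $P$ because $N$ is Poisson$(s)$ with $s\le T$. Assembling the pieces, each of the quantities above is bounded by a positive power of $\varepsilon$ times such a finite expectation, with all bounds monotone in $s$; taking the supremum over $s\in[0,T]$ and letting $\varepsilon\downarrow0$ then yields the claim.
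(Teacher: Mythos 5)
Your overall architecture coincides with the paper's: freeze the walk, write $\tilde u_\varepsilon(s,x)=\mathbb{E}^x\bigl[u_o(X(s))(e^{A_\varepsilon}-e^{A})\bigr]$, use $|e^a-e^b|\le(e^a+e^b)|a-b|$ together with Cauchy--Schwarz, Gaussian moment equivalences and Proposition \ref{main variance inequality}, commute $\nabla$ with $\mathbb{E}^x$ via Lemma \ref{Commutativity of integration with operators}, and split the gradient difference into $(e^{A_\varepsilon}-e^{A})g^{\varepsilon,X}_{s,x}+e^{A}(g^{\varepsilon,X}_{s,x}-g^{X}_{s,x})$; all of this is sound, and your $N$-independent bound $\mathrm{Var}(A)\le s^{2H}$ for $H>1/2$ via the positive covariance density is in fact cleaner than the paper's bound $(N+1)t^{2H}$. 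The genuine gap sits exactly at the point you yourself call the crux, namely the control of $\mathrm{Var}(A_\varepsilon)=\|g^{\varepsilon,X}_{s,x}\|_{\mathcal H}^2$ when $H<1/2$, and neither of your two proposed routes closes it. The ``analogous estimate'' fails because negative correlation of disjoint fBM increments does not transfer to the mollified derivatives: two visits of the path to the same site separated in time by less than $2\varepsilon$ produce smoothing windows $[\theta-\varepsilon,\theta+\varepsilon]$ that overlap, where $\mathbb{E}[\dot W_\varepsilon(\alpha)\dot W_\varepsilon(\beta)]$ can be positive and of size up to $C\varepsilon^{2H-2}$; integrating this near-diagonal contribution over the part of $[0,s]$ where jumps cluster at scale $\varepsilon$ yields a term of order $\varepsilon^{2H-1}L$, $L$ being the length of that part, and this bound blows up as $\varepsilon\downarrow0$ for any fixed path with $L$ bounded away from zero, so no pathwise argument of the kind you sketch can work. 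The comparison route fails too: Proposition \ref{main variance inequality} only gives $\mathrm{Var}(A_\varepsilon)\le 2\,\mathrm{Var}(A)+2CN^{2}\varepsilon^{\min\{2H,1\}}$, and you would then need $\mathbb{E}^x\exp\bigl(cN^{2}\varepsilon^{\min\{2H,1\}}\bigr)$, which is $+\infty$ for every fixed $\varepsilon>0$, since a Poisson variable has no Gaussian-type exponential moments; this violates your own criterion that the power of $N+1$ in the exponent must stay below $1$ (here it is $2$; linear would still be fine, quadratic is fatal).

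This is precisely where the paper invests its main technical effort (Lemma \ref{uniform boundedness} and the two Poisson lemmas preceding it). It decomposes $[0,t]$ into calm and rough periods with threshold $\delta=2\varepsilon$: on the calm part, intervals at a common site are at least $2\varepsilon$ apart, so only consecutive block integrals $Y_k$ can be positively correlated, giving $\mathbb{E}\bigl(\sum_k Y_k\bigr)^2\le 3\sum_k\mathbb{E}Y_k^2\le 3(N+1)^{1-2H}t^{2H}+12(N+1)(2\varepsilon)^{2H}$, while the rough contribution is bounded by $8\varepsilon^{2H-1}L$ and is then rescued \emph{probabilistically}, not pathwise: the clustering lemmas show $\mathrm{P}(L\ge 2n\varepsilon)\le(C\varepsilon)^n$, whence $\mathbb{E}^x\exp\bigl(c\,\varepsilon^{2H-1}L\bigr)=\mathbb{E}^x\exp\bigl(c'\varepsilon^{2H}\,L/\varepsilon\bigr)<\infty$ uniformly for small $\varepsilon$. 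Without an ingredient of this type --- a bound on $\mathrm{Var}(A_\varepsilon)$ whose dependence on the path has uniformly (in small $\varepsilon$) integrable exponential under $\mathbb{E}^x$ --- both your estimate for $\mathbb{E}|\tilde u_\varepsilon|^2$ and the one for $\mathbb{E}\|\nabla\tilde u_\varepsilon\|_{\mathcal H}^2$ remain incomplete for $H<1/2$, since each carries the factor $e^{c\,\mathrm{Var}(A_\varepsilon)}$; supplying this ingredient amounts to reproving the paper's Lemma \ref{uniform boundedness}.
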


Let $X:[0,T]\rightarrow \mathds{Z}^d$ be a piecewise constant function on the lattice $\mathds{Z}^d$ with jump times $t_1<t_2< \cdots <t_N$. Let also $t_0:=0$ and $t_{N+1}:=T$. For any given $\delta >0$ we may chop up $[0,T]$ into calm periods and rough ones. A calm period is defined as an interval in which all the consecutive jumps are at least $\delta$ apart, and a rough period as one in which all the consecutive jumps are at most $\delta$ apart. We additionally require that these intervals begin with a jump and end with another.

We also define $R$ as the number of jumps in $[0,T]$ that are within $\delta$ distance of their previous one. In other words, $R$ is defined to be the cardinality of $\{i\, | \; t_{i}-t_{i-1} < \delta, t_{i} \leq T\}$

\begin{lem}
Consider a Poisson process with intensity $\lambda$ and let $R$(=$R_T$) be defined for any sample path of the Poisson process as above. Then for any given $\delta>0$, we have
\begin{equation*}
\mathrm{P}(R\geq n)\leq (C \delta)^n\,,
\end{equation*}
where $C$ is a constant that depends only on $T$ and $\lambda$.
\end{lem}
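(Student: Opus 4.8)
The plan is to reduce the tail bound to a factorial-moment computation for a sum of indicator events. Write $\tau_i := t_i - t_{i-1}$ for the inter-arrival times, which for a Poisson process of intensity $\lambda$ are i.i.d.\ exponential random variables of rate $\lambda$, and set $A_i := \{\tau_i < \delta,\ t_i \leq T\}$, so that $R = \sum_{i\geq 1}\mathbf{1}_{A_i}$ (a finite sum almost surely). Since $R$ is integer-valued, $\mathbf{1}_{\{R\geq n\}} \leq \binom{R}{n}$, whence $\mathrm{P}(R\geq n)\leq \mathbb{E}\binom{R}{n}$. Using the combinatorial identity $\binom{R}{n} = \sum_{i_1 < \cdots < i_n}\mathbf{1}_{A_{i_1}}\cdots\mathbf{1}_{A_{i_n}}$ and taking expectations gives $\mathbb{E}\binom{R}{n} = \sum_{i_1<\cdots<i_n}\mathrm{P}(A_{i_1}\cap\cdots\cap A_{i_n})$, so it suffices to bound each joint probability and then sum the resulting series.

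Next I would estimate $\mathrm{P}(A_{i_1}\cap\cdots\cap A_{i_n})$. Because $t_{i_1}<\cdots<t_{i_n}$, the constraint $\bigcap_k\{t_{i_k}\leq T\}$ reduces to $\{t_{i_n}\leq T\} = \{\sum_{j=1}^{i_n}\tau_j \leq T\}$, which is contained in $\{\sum_{j\notin I}\tau_j\leq T\}$ where $I = \{i_1,\dots,i_n\}$, since all $\tau_j\geq 0$. The events $\{\tau_{i_k}<\delta\ \text{for all }k\}$ and $\{\sum_{j\notin I}\tau_j\leq T\}$ depend on disjoint subsets of the independent $\tau_j$'s and therefore factorize. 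Writing $m := i_n$, this yields
\[
\mathrm{P}(A_{i_1}\cap\cdots\cap A_{i_n}) \leq (1-e^{-\lambda\delta})^n\,\mathrm{P}\!\Big(\sum_{j\notin I}\tau_j\leq T\Big) \leq (\lambda\delta)^n\,\mathrm{P}(\Pi \geq m-n),
\]
where $\Pi$ is a Poisson random variable of mean $\lambda T$, and I have used $1-e^{-\lambda\delta}\leq\lambda\delta$ together with the standard fact that a sum of $k$ independent rate-$\lambda$ exponentials is $\leq T$ exactly when $\Pi\geq k$.

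Finally I would sum over the index sets. There are $\binom{m-1}{n-1}$ choices of $i_1<\cdots<i_n$ with $i_n=m$, so with $k:=m-n$,
\[
\mathbb{E}\binom{R}{n}\leq (\lambda\delta)^n\sum_{k\geq 0}\binom{n+k-1}{k}\,\mathrm{P}(\Pi\geq k).
\]
Bounding the Poisson tail by $\mathrm{P}(\Pi\geq k)\leq (\lambda T)^k/k!$ and the coefficient by $\binom{n+k-1}{k}\leq 2^{n+k}$, the series is at most $2^n\sum_{k\geq 0}(2\lambda T)^k/k! = 2^n e^{2\lambda T}\leq (2e^{2\lambda T})^n$. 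Hence $\mathrm{P}(R\geq n)\leq (\lambda\delta)^n(2e^{2\lambda T})^n = (C\delta)^n$ with $C := 2\lambda e^{2\lambda T}$, which depends only on $T$ and $\lambda$, as required.

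The main obstacle is the entanglement between the short-gap events $\{\tau_i<\delta\}$ and the before-time-$T$ constraints $\{t_i\leq T\}$: a naive union bound over index sets does not obviously produce the sharp $\delta^n$ scaling. Passing to factorial moments and discarding the constrained exponentials from the sum defining $t_{i_n}$ decouples the two effects cleanly, and the only remaining care is to check that the sum over $m$ contributes at most an exponential factor in $n$ (rather than a factorially growing one), which is precisely what the Poisson-tail estimate $(\lambda T)^k/k!$ secures.
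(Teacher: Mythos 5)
Your proof is correct, and it takes a genuinely different route from the paper. You bound $\mathrm{P}(R\geq n)$ by the factorial moment $\mathbb{E}\binom{R}{n}$, expand it over index sets, and exploit the i.i.d.\ exponential structure of the inter-arrival times: after discarding the $n$ constrained gaps from the sum defining $t_{i_n}$, the short-gap events and the before-time-$T$ constraint decouple exactly, and the resulting series is tamed by the Poisson tail bound $\mathrm{P}(\Pi\geq k)\leq (\lambda T)^k/k!$. The paper instead argues by induction along stopping times: it defines $\sigma_{k}$ as the $k$-th jump lying within $\delta$ of its predecessor, proves a one-step estimate $\mathrm{P}(\sigma_{i+1}<T\mid\sigma_i)\leq (C_T\delta)\mathbf{1}_{\{\sigma_i<T\}}$ using the memoryless property and a first-moment bound $(1-e^{-\lambda\delta})\bigl(\mathbb{E}[N(T)]+1\bigr)\leq C_T\delta$, and iterates to get $\mathrm{P}(\sigma_n<T)\leq (C_T\delta)^n$, which dominates $\mathrm{P}(R\geq n)$. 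Your approach buys a fully explicit constant ($C=2\lambda e^{2\lambda T}$) and avoids conditioning and stopping-time machinery altogether, at the cost of managing the combinatorics of the series; the paper's approach buys brevity once the one-step bound is established, and it is the more robust template when the underlying process has a Markov or renewal structure but not literally i.i.d.\ increments, since only the conditional one-step estimate is needed. The only blemish in your write-up is notational: in the displayed factorization the sum $\sum_{j\notin I}\tau_j$ should read $\sum_{j\leq i_n,\,j\notin I}\tau_j$ (as your prose and the subsequent identification with $\mathrm{P}(\Pi\geq m-n)$ make clear), since the unrestricted sum is infinite.
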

\begin{proof}
Let $A$ be the event of having at least one jump in $[0,t]$ which is within $\delta$ of a previous one and $B$ be the event of having at least one jump in $[0,\delta]$. Let also $N(t)$ be the number of jumps in $[0,t]$ and $t_0:=0$. We have
\begin{equation*}
\begin{split}
\mathrm{P}(A \cup B) &\leq \sum_{k=1}^\infty \mathrm{P}(t_k-t_{k-1}<\delta \;\text{and}\; t_{k-1} <t)\\
&= \sum_{k=1}^\infty \mathrm{P}(t_k-t_{k-1}<\delta \;|\; t_{k-1} <t) \,\mathrm{P}(t_{k-1} <t)\\
&= (1-e^{-\lambda \delta})\sum_{k=1}^\infty \mathrm{P}(t_{k-1} <t)\\
&=(1-e^{-\lambda \delta})\sum_{k=0}^\infty \mathrm{P}(N(t)\geq k)\\
&=(1-e^{-\lambda \delta})\bigl( \mathbb{E}\left( N(t) \right)+1\bigr)\,.
\end{split}
\end{equation*}
Using the fact that the expectation of $N(t)$ is $\lambda t$ and noting the inequality $1-e^{-\lambda \delta} \leq \lambda \delta$, we get $\mathrm{P}(A \cup B) \leq C_t \delta$, where $C_t=\lambda \delta (1+ t\lambda)$. In particular $C_t$ is increasing in $t$.

Now we define $\sigma_1$ as the first jump time that is within $\delta$ of the previous one, i.e. $\sigma_{1}:=\inf\{t_k > 0 \; ; \; t_k-t_{k-1}< \delta\}$. Having defined $\sigma_n$ we define $\sigma_{n+1}$ as the first jump time after $\sigma_n$ that is within $\delta$ of the previous one, i.e. $\sigma_{n+1}:=\inf\{t_k > \sigma_{n} \; ; \; t_k-t_{k-1}< \delta\}$. We have
\begin{equation*}
\mathrm{P}(\sigma_{i+1}<T \, | \; \sigma_{i}) \leq
\begin{cases}
0 & \text{if} \; \; \sigma_i \geq T \\
C_{T-\sigma_i} & \text{if} \; \; \sigma_i < T\,.
\end{cases}
\end{equation*}
As $C_t$ is an increasing function in $t$ we have the following uniform bound:
\begin{equation*}
\mathrm{P}(\sigma_{i+1}<T \, | \;\sigma_{i}) \leq (C_{T} \,\delta) \mathbf{1}_{\{\sigma_i < T\}}\,.
\end{equation*}
So
\begin{equation*}
\mathrm{P}(\sigma_{i+1}<T)=\mathbb{E}\bigl[\mathrm{P}(\sigma_{i+1}<T \, | \;\sigma_{i})\bigr] \leq (C_{T} \delta) \mathrm{P}(\sigma_i < T)\,.
\end{equation*}
So by induction
\begin{equation*}
\mathrm{P}(\sigma_{k}<T) \leq  (C_{T} \delta)^k\,.
\end{equation*}
Now noticing that $R \geq n$ implies $\sigma_n < T$, we get
\begin{equation*}
\mathrm{P}(R \geq n) \leq \mathrm{P}(\sigma_{n}<T) \leq  (C_{T} \delta)^n\,.
\end{equation*}
\end{proof}

\begin{lem}\label{rough length bound}
For a Poisson process of intensity $\lambda$ and for any given $\delta >0$, let $L$ be the total length of its rough periods in $[0,T]$ and $K$ be the number of rough periods in $[0,T]$. Then there exists a constant $C$ depending only on $T$ and $\lambda$ such that
\begin{equation*}
\mathrm{P}(K \geq n) \leq (C\delta)^n
\end{equation*}
and
\begin{equation*}
\mathrm{P}(L \geq n\delta) \leq (C\delta)^n.
\end{equation*}
\end{lem}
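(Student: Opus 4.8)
The plan is to deduce both inequalities from the preceding lemma (the bound $\mathrm{P}(R\geq n)\leq (C\delta)^n$) by comparing the random variables $K$ and $L$ to the counting variable $R$ pathwise. The key structural observation is that every rough period is assembled entirely out of jumps lying within $\delta$ of their predecessor, so that both the number of rough periods and their total length are deterministically controlled by $R$; once the two comparisons $K\leq R$ and $L\leq R\delta$ are in hand, the probabilistic estimates are immediate.

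First I would establish $K\leq R$. By definition a rough period begins with a jump and ends with another, so it contains at least two jumps, say $t_a < t_{a+1} < \dotsb < t_b$ with all consecutive gaps smaller than $\delta$. Then each of the indices $a+1,\dots,b$ is counted by $R$, contributing $b-a\geq 1$. Since distinct rough periods occupy disjoint blocks of jump indices, summing over all $K$ rough periods gives $R\geq \sum (b-a)\geq K$.

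Next I would establish $L\leq R\delta$. The length of the rough period $[t_a,t_b]$ equals $\sum_{j=a}^{b-1}(t_{j+1}-t_j)$, a sum of $b-a$ gaps each smaller than $\delta$, hence is at most $(b-a)\delta$. Summing over all rough periods and noting that the total number of these internal small gaps is at most $R$ yields $L\leq \delta\sum(b-a)\leq R\delta$.

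With both deterministic relations the conclusion follows at once: from $K\leq R$ we get $\mathrm{P}(K\geq n)\leq \mathrm{P}(R\geq n)\leq (C\delta)^n$, and from $L\leq R\delta$ we get $\mathrm{P}(L\geq n\delta)\leq \mathrm{P}(R\delta\geq n\delta)=\mathrm{P}(R\geq n)\leq (C\delta)^n$, in both cases invoking the previous lemma with the same constant $C$. The only point requiring care — and the sole mild obstacle — is reconciling the strict versus non-strict inequalities in the definition of $R$ (which uses $t_i-t_{i-1}<\delta$) and in the definitions of rough and calm periods (which use ``at most $\delta$'' and ``at least $\delta$''); this is harmless, since for a Poisson process the event that some interarrival gap equals exactly $\delta$ has probability zero, so almost surely every gap is unambiguously either small or large and the two notions of a close jump coincide.
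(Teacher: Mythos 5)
Your proof is correct and takes essentially the same route as the paper: the paper's own proof consists precisely of the pathwise comparisons $K \leq R$ and $L < R\delta$, followed by the observation that either event $\{K\geq n\}$ or $\{L\geq n\delta\}$ forces $R\geq n$, and an appeal to the preceding lemma. You simply spell out the justification of these two comparisons (and the harmless strict-versus-non-strict tie issue) in more detail than the paper, which states them without proof.
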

\begin{proof}
As $L < R\delta$ and $K \leq R$, any of $L \geq n\delta$ or $K \geq n$ implies $R \geq n$. The result follows from the previous lemma.
\end{proof}
 Now we are ready to prove the following lemma.
\begin{lem}\label{uniform boundedness}
For any $p \geq 1$, there exists $M>0$ such that $E|u_\varepsilon(t,x)|^p$ is bounded uniformly in $(\varepsilon,t,x)\in(0,M]\times[0,T]\times\mathds{Z}^d$. $E|u(t,x)|^p$ is also bounded uniformly in $(t,x)\in [0,T]\times\mathds{Z}^d$.
\end{lem}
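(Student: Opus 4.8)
The plan is to reduce both assertions to a uniform bound on an exponential moment of the \emph{conditional} variance of the Gaussian expression in the exponent, and then to read this variance off the jump structure of the walk. Write $U:=\sup_y|u_o(y)|$. Since the exponential is positive and $\mathbb{E}^x$ is a probability expectation, Jensen's inequality (for the convex map $r\mapsto r^p$, $p\ge1$) gives
\begin{equation*}
|u_\varepsilon(t,x)|^p\le U^p\,\mathbb{E}^x\Bigl[\exp\Bigl(p\int_0^t\dot{W_\varepsilon}\bigl(s,X(t-s)\bigr)\mathrm{d}s\Bigr)\Bigr]\,,
\end{equation*}
and the analogue for $u$ with $\int_0^t W(\mathrm{d}s,X(t-s))$ in the exponent. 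Taking $\mathbb{E}$ and interchanging it with $\mathbb{E}^x$ by Tonelli, I am left with $\mathbb{E}[\exp(pZ)\mid X]$, where, conditionally on the walk, $Z=\mathbf{W}(g^{\varepsilon,X}_{t,x})$ (resp. $Z=\mathbf{W}(g^{X}_{t,x})$) is a centred Gaussian of variance $\sigma_\varepsilon^2(X):=\|g^{\varepsilon,X}_{t,x}\|_{\mathcal H}^2$ (resp. $\sigma^2(X):=\|g^{X}_{t,x}\|_{\mathcal H}^2$). Hence $\mathbb{E}[\exp(pZ)\mid X]=\exp(\tfrac{p^2}{2}\sigma_\varepsilon^2(X))$, and the lemma reduces to the finiteness of
\begin{equation*}
\mathbb{E}^x\bigl[\exp\bigl(\tfrac{p^2}{2}\sigma^2(X)\bigr)\bigr]
\qquad\text{and}\qquad
\mathbb{E}^x\bigl[\exp\bigl(\tfrac{p^2}{2}\sigma_\varepsilon^2(X)\bigr)\bigr]\,,
\end{equation*}
uniformly in $x$, in $t\in[0,T]$, and (for the second) in $\varepsilon\in(0,M]$. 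By translation invariance of the walk and of the field these do not depend on $x$, and since the bounds below are expressed through the number of jumps in $[0,t]$, which is dominated by the number in $[0,T]$, it suffices to take $t=T$.

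I would treat $u$ first, as it carries no $\varepsilon$. Let $N$ be the number of jumps in $[0,T]$, put $\ell_i:=t_{i+1}-t_i$ and $\Delta_i:=W(t_{i+1},x_i)-W(t_i,x_i)$, so that $\sigma^2(X)=\var(\sum_{i=0}^N\Delta_i)$. For $H\ge\tfrac12$ the triangle inequality in $L^2(\mathbb{E})$ gives $\sigma(X)\le\sum_{i=0}^N\|\Delta_i\|_{L^2}=\sum_{i=0}^N\ell_i^H$, and concavity of $r\mapsto r^H$ under $\sum_i\ell_i=T$ yields $\sigma^2(X)\le(N+1)^{2-2H}T^{2H}\le T^{2H}(N+1)$. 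For $H<\tfrac12$ the increments of a fractional Brownian motion over disjoint intervals are non-positively correlated; grouping the $\Delta_i$ by the site $x_i$ (the underlying fractional Brownian motions being independent for distinct sites) makes every cross-covariance vanish or be non-positive, whence $\sigma^2(X)\le\sum_{i=0}^N\ell_i^{2H}\le(N+1)^{1-2H}T^{2H}\le T^{2H}(N+1)$. In both regimes $\sigma^2(X)\le T^{2H}(N+1)$, and since $N$ is Poisson distributed it has all exponential moments, so $\mathbb{E}^x[\exp(\tfrac{p^2}{2}\sigma^2(X))]\le\mathbb{E}^x[\exp(\tfrac{p^2}{2}T^{2H}(N+1))]<\infty$.

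For $u_\varepsilon$ the scheme is identical, but $\sigma_\varepsilon^2(X)$ must be controlled uniformly for $\varepsilon\le M$, and this is where the calm/rough decomposition enters: I fix $\delta$ and take $M\le\delta/2$. Splitting the integral in the exponent into its calm and rough parts and using $\sigma_\varepsilon(X)\le\sigma_\varepsilon^{\mathrm{calm}}(X)+\sigma_\varepsilon^{\mathrm{rough}}(X)$, the calm part is harmless: the triangle inequality in $L^2(\mathbb{E})$ bounds it by the sum of $\sqrt{\mathfrak{S}_2(\ell_i)}$ over the calm intervals, where each single-interval variance $\mathfrak{S}_2(\ell_i)$ equals $\ell_i^{2H}$ up to an error that is uniformly small for $\varepsilon\le M$ by Proposition \ref{main variance inequality}; since calm intervals have length at least $\delta$ there are at most $T/\delta$ of them, so $\sigma_\varepsilon^{\mathrm{calm}}(X)$ is bounded by a deterministic constant depending only on $T$, $H$ and $\delta$. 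The rough part is the crux and the step I expect to be genuinely delicate: the same triangle inequality would only bound $\sigma_\varepsilon^{\mathrm{rough}}(X)$ by a sum over the $R$ rough jumps, i.e. by a quantity of order $R$, whose square $R^2$ has no finite exponential moment against the Poisson law. One must instead estimate the rough contribution through the covariance kernel $\mathbb{E}[\dot{W_\varepsilon}(\theta,z)\dot{W_\varepsilon}(\eta,z)]$ itself, exploiting its concentration near the diagonal, and show that $\bigl(\sigma_\varepsilon^{\mathrm{rough}}(X)\bigr)^2$ is at most linear in the total rough length $L$ and in the number $R$ of rough jumps, uniformly in $\varepsilon\le M$. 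Granting such a bound, Lemma \ref{rough length bound} supplies $\mathrm{P}(R\ge n)\le(C\delta)^n$ and $\mathrm{P}(L\ge n\delta)\le(C\delta)^n$, so choosing $\delta$ small (depending on $p$) renders $\mathbb{E}^x[\exp(\tfrac{p^2}{2}\sigma_\varepsilon^2(X))]$ finite uniformly in $\varepsilon\le M$. Combining the calm and rough estimates yields the required uniform bound and finishes the proof.
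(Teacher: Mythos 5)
Your reduction of both claims to the exponential moment $\mathbb{E}^x[\exp(\tfrac{p^2}{2}\sigma^2(X))]$ of the conditional variance, your treatment of $u$ (triangle inequality in $L^2$ for $H\ge \tfrac{1}{2}$, non-positive correlation of disjoint same-site increments for $H<\tfrac{1}{2}$), and your calm-part bound are all sound, and the first two coincide with the paper's own argument. The genuine gap is exactly where you flag it: the rough-part estimate, which you state ($(\sigma^{\mathrm{rough}}_\varepsilon(X))^2$ is at most linear in $L$ and $R$, uniformly in $\varepsilon\le M$) and then explicitly assume (``granting such a bound''). That estimate is the crux of the lemma for $H<\tfrac{1}{2}$, and nothing in your text proves it. Worse, your choice of a \emph{fixed} $\delta$ makes the needed estimate strictly harder than what the paper proves: the paper takes $\delta=2\varepsilon$, so that the covariance-kernel argument (off-band covariances non-positive when $H\le\tfrac{1}{2}$, on-band $|\mathbb{E}\,\dot{W_{\varepsilon}}(\alpha)\dot{W_{\varepsilon}}(\beta)|\le 2\varepsilon^{2H-2}$, band measure at most $4\varepsilon L$) yields $\mathbb{E}(Z^2)\le 8\varepsilon^{2H-1}L$; this bound blows up as $\varepsilon\downarrow 0$, but is rescued precisely because with $\delta=2\varepsilon$ lemma \ref{rough length bound} gives $\mathrm{P}(L\ge 2n\varepsilon)\le(2C\varepsilon)^n$, i.e.\ the tail of $L/\varepsilon$ improves as $\varepsilon\downarrow 0$, so $\mathbb{E}^x\exp\bigl(16p^2\varepsilon^{2H}L/\varepsilon\bigr)$ stays bounded. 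With your fixed $\delta$ the tail of $L$ does not improve as $\varepsilon\downarrow 0$, so the readily available bound $8\varepsilon^{2H-1}L$ is useless for small $\varepsilon$, and you genuinely need the stronger uniform linear bound you postulate. It does appear to be provable, but only via an extra idea absent from your proposal — for instance: any same-site pair $(\alpha,\beta)$ with $|\alpha-\beta|<2\varepsilon$ lying in two distinct intervals must straddle a jump occurring within $2\varepsilon<\delta$ of its predecessor, so the positive off-diagonal band has measure at most $4\varepsilon^2R$ and contributes at most $CR\varepsilon^{2H}\le CR\delta^{2H}$, while the diagonal terms contribute at most $C'R\delta^{2H}$.

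A second, independent problem is that your calm/rough scheme silently assumes $H\le\tfrac{1}{2}$. For $H>\tfrac{1}{2}$ the covariance kernel of $\dot{W_{\varepsilon}}$ is non-negative everywhere (increments of fractional Brownian motion are then positively correlated), so there is no negative off-band part and the ``concentration near the diagonal'' you invoke is unavailable; your rough part would then only admit the quadratic-in-$R$ bound you yourself reject. The paper avoids this entirely: for $H>\tfrac{1}{2}$ it uses the refined single-interval bound \eqref{alternative difference bound for H larger than half}, whose error term is proportional to the interval \emph{length} times $\varepsilon^{2H-1}$, so that $\var\bigl(\sum_i\cdot\bigr)\le (N+1)\sum_i\var(\cdot)$ produces a bound linear in $N$, and Poisson exponential moments finish. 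Your write-up needs this (or some) separate argument for $H>\tfrac{1}{2}$.
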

\begin{proof}
First consider $E|u(t,x)|^p$.
\begin{equation*}
\begin{split}
&E|u(t,x)|^p
\leq \|u_o\|_\infty^p \mathbb{E}^x \, \mathbb{E} \, \exp\bigl[p \int_{0}^{t} W\bigl(\mathrm{d}s, X(t-s)\bigr) \bigr]\\
&\qquad = \|u_o\|_\infty^p \mathbb{E}^x \exp \Bigl( \frac{p^2}{2}\,\var \bigl[\int_{0}^{t} W\bigl(\mathrm{d}s, X(t-s)\bigr)\bigr]\Bigr)\,.
\end{split}
\end{equation*}
So it is enough to find a uniform bound on $\var \bigl[\int_{0}^{t} W\bigl(\mathrm{d}s, X(t-s)\bigr)\bigr]$. For any sample path $X(\cdot)$ of simple random walk on $\mathds{Z}^d$ let $t_1< t_2 < \dotsb < t_N$ be the jump times of the reversed path $X(t-\cdot)$ and $x_1$, $x_2$, ..., $x_{N+1}$ be its values. Let also $t_0:=0$ and $t_{N+1}:=t$. We have
\begin{equation*}
\begin{split}
\var \bigl[\int_{0}^{t} W\bigl(\mathrm{d}s, X(t-s)\bigr)\bigr]
&= \var \bigl[\sum_{i=1}^{N+1}\int_{t_{i-1}}^{t_i} W\bigl(\mathrm{d}s, x_i\bigr)\bigr]\\
&= \var \bigl[\sum_{i=1}^{N+1} \; W(t_i, x_i)-W(t_{i-1}, x_i) \bigr]\,.
\end{split}
\end{equation*}

For $H\geq \frac{1}{2}$ we have
\begin{equation*}
\begin{split}
&\var \bigl[\sum_{i=1}^{N+1} \; W(t_i, x_i)-W(t_{i-1}, x_i) \bigr]\\
&\qquad \leq  (N+1) \sum_{i=1}^{N+1} \; \var\bigl[ W(t_i, x_i)-W(t_{i-1}, x_i) \bigr]\\
&\qquad = (N+1) \sum_{i=1}^{N+1} \; (t_i-t_{i-1})^{2H}
\leq (N+1) t^{2H}\,.
\end{split}
\end{equation*}
As $N$ is a Poisson random variable, $\mathbb{E} \exp(C N)$ is finite for any constant $C$.

For $H\leq \frac{1}{2}$ we use the well-known property that disjoint increments of a fractional Brownian motion with Hurst parameter less than half are negatively correlated. So we have
\begin{equation*}
\begin{split}
&\var \bigl[\sum_{i=1}^{N+1} \; W(t_i, x_i)-W(t_{i-1}, x_i) \bigr]
\leq  \sum_{i=1}^{N+1} \; \var\bigl[ W(t_i, x_i)-W(t_{i-1}, x_i) \bigr]\\
&\qquad \qquad \qquad \qquad = \sum_{i=1}^{N+1} \; (t_i-t_{i-1})^{2H}
\leq (N+1)^{1-2H} t^{2H}\,.
\end{split}
\end{equation*}
In the last inequality we have used the fact that for $H\leq \frac{1}{2}$, the expression $x_1^{2H} + x_2^{2H} + \dotsb + x_m^{2H}$ achieves its maximum when all $x_i$'s are equal and the maximum is hence $m^{1-2H}(\sum_i x_i)^{2H}$.\\
Again as $N$ is Poisson, $\mathbb{E} \exp(C N^{\alpha})$ is finite for any constants $C$ and $\alpha \leq 1$.

Now let us consider $E|u_\varepsilon(t,x)|^p$:
\begin{equation}\label{boundedness of exponential}
\begin{split}
E|u_\varepsilon(t,x)|^p
&\leq \|u_o\|_\infty^p \mathbb{E}^x \, \mathbb{E} \, \exp\bigl[p\int_{0}^{t} \dot{ W_{\varepsilon}}\bigl(s, X(t-s)\Bigr)\mathrm{d}s\bigr] \\
&= \|u_o\|_\infty^p \mathbb{E}^x \, \exp \Bigl( \frac{p^2}{2}\,\var \bigl[\int_{0}^{t} \dot{W_{\varepsilon}}\bigl(s, X(t-s)\Bigr)\mathrm{d}s\bigr]\Bigr)\,.
\end{split}
\end{equation}
Again we need to distinguish between $H$ larger and less than half.\\

When $H$ is larger than a half, $\var\left( \int_{t_1}^{t_2} \dot{W_{\varepsilon}}(s)\mathrm{d}s\right)$ being equal to $\mathfrak{S}_2$ introduced in section \ref{Approximation rate}, is bounded by $(t_2-t_1)^{2H}+2(t_2-t_1) (2H+1) \varepsilon^{2H-1}$ by inequality \eqref{alternative difference bound for H larger than half}. With the above notation
\begin{equation*}
\begin{aligned}
&\var \bigl[\int_{0}^{t} \dot{W_{\varepsilon}}\bigl(s, X(t-s)\Bigr)\mathrm{d}s\bigr]
= \var \bigl[\sum_{i=1}^{N+1} \int_{t_{i-1}}^{t_i} \dot{W_{\varepsilon}}(s,x_i)\mathrm{d}s \bigr]\\
&\qquad \qquad \leq (N+1) \sum_{i=1}^{N+1} \var \bigl(\int_{t_{i-1}}^{t_i} \dot{W_{\varepsilon}}(s,x_i)\mathrm{d}s\bigr)\\
&\qquad \qquad \leq (N+1) \sum_{i=1}^{N+1} \Bigl( (t_{i+1}-t_{i})^{2H}+2(t_{i+1}-t_{i}) (2H+1) \varepsilon^{2H-1}\Bigr)\\
&\qquad \qquad \leq (N+1)\Bigl(t^{2H}+2(2H+1)\varepsilon^{2H-1} t\Bigr)\,.
\end{aligned}
\end{equation*}
Again we get a multiple of $N$ and hence a finite bound.

When $H\leq \frac{1}{2}$, the situation is more complicated.
Let $\{t_i\}_{i=1}^N$ be the increasingly ordered jump times of $\{X(t-s)\,;\, s\in [0,t]\}$ with additional convention of $t_0:=0$ and $t_{N+1}:=t$. We decompose $[0,t]$ into calm and rough periods of $X(t-\cdot)$ with respect to $\delta=2\varepsilon$. Let increasingly enumerate the set of indices $\{i\;;\;t_i-t_{i-1}\geq \delta\}$ as $\{t_{i_k}\}_k$. In other words, we single out and enumerate those time intervals $[t_i-1,t_i]$ whose length is larger than or equal to $\delta=2\varepsilon$. It is evident that such intervals constitute the calm periods. Let also $\{Y_k\}_k$ be the integral of $\dot{W_{\varepsilon}}(\cdot,x_{i_k})$ over the time interval $[t_{i_k-1},t_{i_k}]$, i.e. $Y_k:=\int_{t_{i_{k}-1}}^{t_{i_k}} \dot{W_{\varepsilon}}(s,x_{i_k})\mathrm{d}s$. Let also $Z$ be the sum of the integrals over all rough periods. Using equation \eqref{boundedness of exponential}, Cauchy-Schwartz and the simple inequality $\mathbb{E}(X+Y)^2 \leq 2\mathbb{E} X^2 +2\mathbb{E} Y^2$, we have
\begin{equation*}
\begin{split}
&E|u_\varepsilon(t,x)|^p
\leq \|u_o\|_\infty^p \mathbb{E}^x \, \exp \bigl( \frac{p^2}{2}\,\mathbb{E} (Z +\sum_k Y_k)^2 \, \bigr)\\
&\qquad \leq \|u_o\|_\infty^p \bigl[ \mathbb{E}^x \, \exp \bigl( 2p^2\,\mathbb{E} (Z^2) \,\bigr)\bigr]^{1/2} \bigl[ \mathbb{E}^x \, \exp \bigl( 2p^2\,\mathbb{E} \, (\sum_k Y_k)^2 \,\bigr)\bigr]^{1/2}.
\end{split}
\end{equation*}
Once again we will use the negativeness of the covariance of disjoint increments of a fractional Brownian motion with Hurst parameter less than half.

First we consider the integral over the rough periods, i.e. the first term above. Let $I$ be the union of all the rough intervals in $[0,t]$.

We notice that for $\alpha, \beta \in [0,t]$, and a fractional Brownian motion $W(\cdot)$ of Hurst parameter $H\leq1/2$ we have
$$
\mathbb{E}\dot{W_{\varepsilon}}(\alpha)\dot{W_{\varepsilon}}(\beta)\leq0 \qquad \text{for}\quad |\alpha-\beta|\geq 2\varepsilon\,,
$$
which is nothing but the negative correlation of non-overlapping increments of a fBM,
and
$$
\bigl|\mathbb{E}\dot{W_{\varepsilon}}(\alpha)\dot{W_{\varepsilon}}(\beta)\bigr|\leq \frac{4(4\varepsilon)^{2H}}{(2\varepsilon)^2}\qquad \text{for} \quad |\alpha-\beta|< 2\varepsilon\,,
$$
which is easily followed by a simple calculation.

This shows that for $\alpha, \beta \in [0,t]$, there are only two possibilities: either
$\dot{W_{\varepsilon}}\bigl(\alpha,X(t-\alpha)\bigr)$ and $\dot{W_{\varepsilon}}\bigl(\beta,X(t-\beta)\bigr)$ have negative correlation or they are uncorrelated, depending on whether $X(t-\alpha)$ is the same as $X(t-\beta)$ or not. So we have
\begin{equation*}
\begin{split}
\mathbb{E} (Z^2)
&= \mathbb{E} \bigl[ \int_{I} \dot{W_{\varepsilon}}\bigl(\alpha,X(t-\alpha)\bigr)\mathrm{d}\alpha \int_{I} \dot{W_{\varepsilon}}\bigl(\beta,X(t-\beta)\bigr)\mathrm{d}\beta \bigl]\\
&= \int_{\alpha\in I} \int_{\beta \in I} \mathbb{E} \bigl[ \dot{W_{\varepsilon}}\bigl(\alpha,X(t-\alpha)\bigr)  \dot{W_{\varepsilon}}\bigl(\beta,X(t-\beta)\bigr) \bigl]\mathrm{d}\beta \mathrm{d}\alpha \\
&\leq \int_{\alpha\in I} \int_{\beta \in I} \mathbb{E} \bigl[ \dot{W_{\varepsilon}}\bigl(\alpha,X(t-\alpha)\bigr)  \dot{W_{\varepsilon}}\bigl(\beta,X(t-\beta)\bigr) \bigl] \mathbf{1}_{|\alpha-\beta|< 2\varepsilon} \mathrm{d}\beta \mathrm{d}\alpha\\
&\leq \int_{\alpha\in I} \int_{\beta \in I} \bigl|\mathbb{E} \bigl( \dot{W_{\varepsilon}}(\alpha)  \dot{W_{\varepsilon}}(\beta) \bigl)\bigr| \mathbf{1}_{|\alpha-\beta|< 2\varepsilon} \mathrm{d}\beta \mathrm{d}\alpha\\
&\leq \int_{\alpha\in I} \int_{\beta \in I} \frac{2\varepsilon^{2H}}{\varepsilon^2}  \mathbf{1}_{|\alpha-\beta|< 2\varepsilon} \mathrm{d}\beta \mathrm{d}\alpha \\
&=  \frac{2\varepsilon^{2H}}{\varepsilon^2} \int_{\alpha \in I} (4\varepsilon) \mathrm{d}\alpha \, \leq \, 8 \varepsilon^{2H-1} L\,,
\end{split}
\end{equation*}
where $L$ is the total length of rough periods, i.e. the length of $I$.\\
So
\begin{equation*}
\mathbb{E}^x \, \exp \bigl( 2p^2\,\mathbb{E} (Z^2) \,\bigr) \leq \mathbb{E}^x \,\exp \bigl( 16p^2 \varepsilon^{2H} L / \varepsilon \,\bigr)\,.
\end{equation*}
As $L / \varepsilon$ has exponential tail by lemma \ref{rough length bound}, the above expectation is finite for $\varepsilon$ small enough.

For the second term, $\mathbb{E} \, (\sum_k Y_k)^2$, observe that the length of each time interval $[t_{i_k-1},t_{i_k}]$ is larger than $2 \varepsilon$ which means the distance of every two non-neighboring such intervals is at least $2 \varepsilon$. But this means that only consecutive $Y_k$'s can be positively correlated because for any two intervals $I_1$ and $I_2$ that are at least $2\varepsilon$ apart, the integrals $\int_{I_1} \dot{W_{\varepsilon}}(s)\mathrm{d}s$ and $\int_{I_2} \dot{W_{\varepsilon}}(s)\mathrm{d}s$ are negatively correlated which in turn is a consequence of the negative correlation of disjoint intervals of a fractional Brownian motion with $H\leq \frac{1}{2}$. So
\begin{equation*}
\begin{split}
\mathbb{E}\bigl[ (\sum_k Y_k)^2\bigr]
& \leq \mathbb{E}(Y_1^2)+2\mathbb{E}(Y_1 Y_2)+\mathbb{E}(Y_2^2)
+2\mathbb{E}(Y_2 Y_3)+\mathbb{E}(Y_3^2)+...\\
& \qquad +2\mathbb{E}(Y_{n-1} Y_{n})+\mathbb{E}(Y_m^2)\\
& \leq 2\mathbb{E}(Y_1^2)+3\mathbb{E}(Y_2^2)+3\mathbb{E}(Y_3^2)+...+3\mathbb{E}(Y_{n-1}^2)+2\mathbb{E}(Y_m^2)\\
& \leq 3 \sum_k \mathbb{E}(Y_k^2)\,.
\end{split}
\end{equation*}
In the first inequality we have used the fact that for non-consecutive $Y_i$ and $Y_j$, their covariance $\mathbb{E}(Y_i Y_j)$ is negative and in the last inequality we have used $2\mathbb{E}(XY)\leq \mathbb{E}(X^2)+\mathbb{E}(Y^2)$. Using equation \eqref{difference bound for H smaller than half} we have
\begin{equation*}
\var\left[ \int_{t_i}^{t_{i+1}} \dot{W_{\varepsilon}}(s)\mathrm{d}s \right] \leq (t_{i+1}-t_i)^{2H}+ 4(2\varepsilon)^{2H}\,.
\end{equation*}
So noting $m\leq N$, where $N$ denotes the number of jumps in $[0,t]$ and using the fact that $x_1^{2H} + x_2^{2H} + \dotsb + x_m^{2H}$ is bounded by $m^{1-2H}(\sum_i x_i)^{2H}$ for $H\leq \frac{1}{2}$ which is a consequence of concavity of $(\cdot)^{2H}$, we get
$$
\begin{aligned}
\mathbb{E}\bigl[ (\sum_k Y_k)^2\bigr]
&\leq \,3 \sum_{k=1}^{m}[\,(t_{i_k}-t_{{i_k}-1})^{2H}+ 4(2\varepsilon)^{2H}]\\
&\leq 3 m^{1-2H}[\sum_{k=1}^{m}(t_{i_k}-t_{{i_k}-1})]^{2H} + 12m(2\varepsilon)^{2H}\\
&\leq 3 (N+1)^{1-2H} t^{2H} + 12(N+1)(2\varepsilon)^{2H}\,.
\end{aligned}
$$
\end{proof}

\begin{proof}[\textsc{Proof of proposition \ref{theorem on convergence of u_epsilon}}]
We give the same argument used in \cite{Nualart2}.\\
Since $u_o$ is bounded, for simplicity and without any loss of generality we drop it from now on.\\
For $p\geq 1$ arbitrary, using the inequalities $|e^a - e^b|\leq (e^a + e^b)|a - b|$ and $(a+b)^n \leq 2^{n-1} (a^n+b^n)$ and also H\"older's and Jensen's inequalities we get
\begin{equation}\label{bound on suprimum}
\begin{split}
&\mathbb{E}\bigl|u^\varepsilon(t,x) - u(t,x)\bigr|^p \\
& \qquad = \mathbb{E} \bigl| \mathbb{E}^x\bigl(e^{\mathbf{W}(g_{t,x}^{\varepsilon, X})} - e^{\mathbf{W}(g_{t,x}^{X})}\bigr)\bigr|^p \\
& \qquad \leq \mathbb{E}^x \,\mathbb{E}\bigl|e^{\mathbf{W}(g_{t,x}^{\varepsilon, X})} - e^{\mathbf{W}(g_{t,x}^{X})}\bigr|^p \\
& \qquad \leq \mathbb{E}^x \Bigl( \mathbb{E}\bigl(e^{\mathbf{W}(g_{t,x}^{\varepsilon, X})} + e^{\mathbf{W}(g_{t,x}^{X})}\bigr)^{2p} \Bigr)^{1/2} \mathbb{E}^x \Bigl( \mathbb{E} |\mathbf{W}(g_{t,x}^{\varepsilon, X}) - \mathbf{W}(g_{t,x}^{X})|^{2p} \Bigr)^{1/2} \\
& \qquad \leq C \Bigl( \mathbb{E}^x\, \mathbb{E}\bigl(e^{2p \mathbf{W}(g_{t,x}^{\varepsilon, X})} + e^{2p \mathbf{W}(g_{t,x}^{X})}\bigr) \Bigr)^{1/2} \mathbb{E}^x  \, \mathbb{E} |\mathbf{W}(g_{t,x}^{\varepsilon, X}) - \mathbf{W}(g_{t,x}^{X})|^2\,,
\end{split}
\end{equation}
where in the second inequality we have used the fact that for Gaussian random variables all the $n$-norms are equivalent to 2-norm.\\
So by applying lemma \ref{uniform boundedness} and proposition \ref{main variance inequality} we obtain
\begin{equation*}
\sup_{t\in[0,T]} \mathbb{E} |\tilde{u}_\varepsilon(t,x)|^2 \longrightarrow 0 \qquad \text{as} \quad \varepsilon \downarrow 0\,.
\end{equation*}
For the convergence of $\nabla \widetilde{u}_\varepsilon$, we use the fact that for a separably-valued $\mathbb{D}^{1,2}$-valued random variable $f\in \mathcal{L}^1(\mathcal{X}; \mathbb{D}^{1,2})$ with $\mathcal{X}$ a probability space independent of the underlying Gaussian space of $\mathbb{D}^{1,2}$, we have $\mathbb{E} \nabla f = \nabla \mathbb{E} f$ provided that $\mathbb{E}(\|f\|_{\mathbb{D}^{1,2}}) < \infty$, where the expectations are taken with respect to $\mathcal{X}$. This follows from lemma \ref{Commutativity of integration with operators}.\\
So we have
$$
\nabla u_\varepsilon (t,x)=\mathbb{E}^x[ g_{t,x}^{\varepsilon, X} e^{\mathbf{W}(g_{t,x}^{\varepsilon, X})}]
$$
$$
\nabla u(t,x)=\mathbb{E}^x[ g_{t,x}^{X} e^{\mathbf{W}(g_{t,x}^{X})}]\,.
$$
So
\begin{equation*}
\begin{split}
&E\| \nabla u^\varepsilon(t,x)-\nabla u(t,x)\|_\mathcal{H}^2 \\
&\qquad =E\bigl\| \mathbb{E}^x \bigl(g_{t,x}^{\varepsilon, X} e^{\mathbf{W}(g_{t,x}^{\varepsilon, X})} - g_{t,x}^{X} e^{\mathbf{W}(g_{t,x}^{X})}\bigr)\bigr\|_\mathcal{H}^2 \\
&\qquad \leq 2 \mathbb{E} \mathbb{E}^x \Bigl( e^{\mathbf{W}(g_{t,x}^{\varepsilon, X})} \| g_{t,x}^{\varepsilon, X}-g_{t,x}^{X}\|_\mathcal{H}^2  \Bigr) \\
&\qquad \qquad + 2 \mathbb{E} \mathbb{E}^x \Bigl( |e^{\mathbf{W}(g_{t,x}^{\varepsilon, X})}-e^{\mathbf{W}(g_{t,x}^{X})}|^2 \| g_{t,x}^{X}\|_\mathcal{H}^2  \Bigr)\,.
\end{split}
\end{equation*}
If we apply the Schwartz inequality and note that $\| g_{t,x}^{\varepsilon, X}-g_{t,x}^{X}\|_\mathcal{H}^2=E|\mathbf{W}(g_{t,x}^{\varepsilon, X})-\mathbf{W}(g_{t,x}^{X})|^2$,  along with fact that for Gaussian random variables all norms are equivalent to the $2$-norm, using equation \eqref{bound on suprimum}, lemma \ref{uniform boundedness} and proposition \ref{main variance inequality} we get

\begin{equation*}
\sup_{t\in[0,T]} \mathbb{E} \|\nabla \tilde{u}_\varepsilon(t,x)\|_{\mathcal{H}}^2 \longrightarrow 0 \qquad \text{as} \quad \epsilon \downarrow 0\,.
\end{equation*}
\end{proof}

\section{Convergence of $V_{1,\varepsilon}$}\label{convergence of v_1}

For $V_{1,\varepsilon}$ we use basically the same proof as in \cite{Nualart2}. As one can easily show that
$$\int_0^t \| \tilde{u}_\varepsilon(s,x) g^{\varepsilon}_{s,x} \|_{\mathbb{D}^{1,2}(\mathcal{H})} \mathrm{d}s < \infty\,,$$
where $\mathbb{D}^{1,2}(\mathcal{H})$ denotes the Sobolev space of $\mathcal{H}$-valued $\mathcal{L}^2$ random variables with $\mathcal{L}^2$ Malliavin derivatives, we can apply lemma \ref{Commutativity of integration with operators} to get:
\begin{equation*}
V_{1,\varepsilon}=\boldsymbol{\delta}(\psi_\varepsilon)\,,
\end{equation*}
where
\begin{equation*}
\psi_\varepsilon := \int_0^t \tilde{u}_\varepsilon(s,x) g^{\varepsilon}_{s,x}  \mathrm{d}s.
\end{equation*}
So using inequality \eqref{divergence contractivity}, we have
\begin{equation*}
\mathbb{E} \bigl(|V_{1,\varepsilon}|^2\bigr)= \mathbb{E}\bigl( \boldsymbol{\delta} (\psi_\varepsilon)^2\bigr) \leq \mathbb{E}\bigl( \|\psi_\varepsilon\|_\mathcal{H}^2\bigr) + \mathbb{E}\bigl( \|\nabla \psi_\varepsilon\|_{\mathcal{H}\otimes \mathcal{H}}^2\bigr)\,.
\end{equation*}
For the first right hand side term we have
\begin{equation*}
\begin{split}
&\mathbb{E}\bigl( \|\psi_\varepsilon\|_\mathcal{H}^2\bigr)\\
&\quad = \int_0^t \int_0^t \mathbb{E}\bigl(\tilde{u}_\varepsilon(s_1,x)\tilde{u}_\varepsilon(s_2,x)\bigr) \langle g^{\varepsilon}_{s_1,x},g^{\varepsilon}_{s_2,x} \rangle  \mathrm{d}s_1 \mathrm{d}s_2 \\
&\quad \leq M_1 \int_0^t \int_0^t \bigl|\mathbb{E}\bigl(\dot{W_{\varepsilon}}(s_1,x) \dot{W_{\varepsilon}}(s_2,x)\bigr)\bigr| \mathrm{d}s_1 \mathrm{d}s_2\,,
\end{split}
\end{equation*}
where $M_1=\sup_{s\in[0,t]} \mathbb{E} |\tilde{u}_\varepsilon(s,x)|^2$. Here taking the integration out of the inner product is justified by once more using lemma \ref{Commutativity of integration with operators}.\\
$\int_0^t \int_0^t \bigl|\mathbb{E}\bigl(\dot{W_{\varepsilon}}(s_1,x) \dot{W_{\varepsilon}}(s_2,x)\bigr)\bigr| \mathrm{d}s_1 \mathrm{d}s_2$ being the same as the term $\mathfrak{S}_2$ in equation \eqref{piecewise terms}, is uniformly upper-bounded using equations \eqref{difference bound for H smaller than half} and \eqref{difference bound for H larger than half}. On the other hand, $M_1$ goes to zero as $\varepsilon \downarrow 0$. So it follows that $\mathbb{E}\bigr( \|\psi_\varepsilon\|_\mathcal{H}^2\bigl)$ converges to zero.

For the second term, applying lemma \ref{Commutativity of integration with operators} to the derivative operator and inner product we get
\begin{equation*}
\begin{split}
&\mathbb{E}\bigl( \|\nabla \psi_\varepsilon\|_{\mathcal{H}\otimes \mathcal{H}}^2\bigr)\\
&\quad = \mathbb{E} \bigl\langle \nabla \int_0^t \tilde{u}_\varepsilon(s_1,x) g^{\varepsilon}_{s_1,x}  \mathrm{d}s_1 , \nabla \int_0^t \tilde{u}_\varepsilon(s_2,x) g^{\varepsilon}_{s_2,x}  \mathrm{d}s_2 \bigr\rangle\\
&\quad = \mathbb{E} \bigl\langle\int_0^t \nabla\bigl(\tilde{u}_\varepsilon(s_1,x)\bigr) \otimes g^{\varepsilon}_{s_1,x}  \mathrm{d}s_1 , \int_0^t \nabla\bigl(\tilde{u}_\varepsilon(s_2,x)\bigr) \otimes g^{\varepsilon}_{s_2,x}  \mathrm{d}s_2\bigr\rangle \\
&\quad = \mathbb{E} \int_0^t \int_0^t \bigl\langle \nabla\bigl(\tilde{u}_\varepsilon(s_1,x)\bigr) \otimes g^{\varepsilon}_{s_1,x}  , \nabla\bigl(\tilde{u}_\varepsilon(s_2,x)\bigr) \otimes g^{\varepsilon}_{s_2,x}  \bigr\rangle \mathrm{d}s_1 \mathrm{d}s_2\\
&\quad = \int_0^t \int_0^t \mathbb{E} \bigl\langle \nabla\bigl(\tilde{u}_\varepsilon(s_1,x)\bigr),\nabla\bigl(\tilde{u}_\varepsilon(s_2,x)\bigr)  \bigr\rangle \langle g^{\varepsilon}_{s_1,x}  , g^{\varepsilon}_{s_2,x}  \rangle \mathrm{d}s_1 \mathrm{d}s_2\\
&\quad \leq M_2 \int_0^t \int_0^t |\langle g^{\varepsilon}_{s_1,x}  , g^{\varepsilon}_{s_2,x}  \rangle| \mathrm{d}s_1 \mathrm{d}s_2\\
&\quad = M_2 \int_0^t \int_0^t \bigl|\mathbb{E}[\dot{W_{\varepsilon}}(s_1,x) \dot{W_{\varepsilon}}(s_2,x)]\bigr|\,,
\end{split}
\end{equation*}
where $M_2=\sup_{s\in[0,t]} \mathbb{E} \|\nabla \tilde{u}_\varepsilon(s,x)\|_{\mathcal{H}}^2$. \\
The same argument given for the first term above shows that $\mathbb{E}\bigl( \|\nabla \psi_\varepsilon\|_{\mathcal{H}\otimes \mathcal{H}}^2\bigr)$ also converges to zero as $\varepsilon$ goes down to zero.

\section{Convergence of $V_{2,\varepsilon}$}\label{convergence of v_2}
Establishing the convergence of $V_{2,\varepsilon}$ is more involved. First applying lemma \ref{Commutativity of integration with operators} to $u$ and $u_\varepsilon$ for the derivative operator we get
$$
\nabla u_\varepsilon (s,x)=\mathbb{E}^x[ u_o(X(s)) \; e^{\mathbf{W}(g_{s,x}^{\varepsilon, X})} g_{s,x}^{\varepsilon, X}]
$$
and
$$
\nabla u_ (s,x)=\mathbb{E}^x[ u_o(X(s)) \; e^{\mathbf{W}(g_{s,x}^{X})} g_{s,x}^{X}]\,.
$$
Let
\begin{equation*}
A^X(s,x):=u_o(X(t)) \; e^{\mathbf{W}(g_{s,x}^{X})}
\end{equation*}
and
\begin{equation*}
A^{\varepsilon, X}(s,x):= u_o(X(s)) \; e^{\mathbf{W}(g_{s,x}^{\varepsilon, X})}\,.
\end{equation*}
Hence we have
\begin{equation*}
\begin{split}
V_{2,\varepsilon} &= \int_0^t \langle \nabla u_\varepsilon(s,x) - \nabla u(s,x) , g_{s,x}^{\varepsilon}\rangle \mathrm{d}s\\
&= \int_0^t \mathbb{E}^x\bigl[\bigl\langle A^X(s,x) g_{s,x}^X - A^{\varepsilon,X}(s,x) g_{s,x}^{\varepsilon,X}\; ,\; g_{s,x}^{\varepsilon}\bigr\rangle \bigr] \mathrm{d}s\\
&= \int_0^t \mathbb{E}^x\bigl[\langle (A^X - A^{\varepsilon, X}) g^{\varepsilon,X} , g^{\varepsilon}\rangle + \langle A^X (g^X- g^{\varepsilon,X}) , g^{\varepsilon}\rangle \bigr] \mathrm{d}s\\
&= \int_0^t \mathbb{E}^x[(A^X - A^{\varepsilon, X}) \langle g^{\varepsilon,X} , g^{\varepsilon}\rangle] + \int_0^t \mathbb{E}^x[A^X \langle g^X- g^{\varepsilon,X} , g^{\varepsilon}\rangle] \mathrm{d}s \,.
\end{split}
\end{equation*}
Let
\begin{equation*}
\mathrm{P}_{1,\varepsilon}:=\int_0^t \mathbb{E}^x[(A^X - A^{\varepsilon, X}) \langle g^{\varepsilon,X} , g^{\varepsilon}\rangle]\mathrm{d}s
\end{equation*}
and
\begin{equation*}
\mathrm{P}_{2,\varepsilon}:=\int_0^t \mathbb{E}^x[A^X \langle g^X- g^{\varepsilon,X} , g^{\varepsilon}\rangle] \mathrm{d}s\,.
\end{equation*}
So we will show in two steps that each of these terms converge to zero in $\mathcal{L}^2$.\\
\textbf{Step I: Convergence of $\mathrm{P}_{1,\varepsilon}$}.
For the first term, using H\"older inequality for $\frac{1}{p}+\frac{1}{q}=1$ we have
\begin{equation*}
\mathbb{E}^x|(A^X - A^{\varepsilon, X}) \langle g^{\varepsilon,X} , g^{\varepsilon}\rangle|
\leq \bigl(\mathbb{E}^x|A^X - A^{\varepsilon, X}|^q\bigr)^{1/q} \bigl(\mathbb{E}^x |\langle g^{\varepsilon,X} , g^{\varepsilon}\rangle|^p\bigr)^{1/p}\,.
\end{equation*}
In fact equation \eqref{bound on suprimum} also proves that for any $p\geq 1$
\begin{equation*}
\sup_{s\in[0,t]} \mathbb{E} \, \mathbb{E}^x |A^X(s,x) - A^{\varepsilon,X}(s,x)|^p \longrightarrow 0 \qquad \text{as} \quad \varepsilon \downarrow 0\,.
\end{equation*}

So if we can show that $\mathbb{E}^x |\langle g^{\varepsilon,X} , g^{\varepsilon}\rangle|^p$ is bounded by some constant which depends only on $H$ and $t$ we are done because then
\begin{equation*}
\begin{split}
&\mathbb{E} \Bigl(\int_0^t \mathbb{E}^x\bigl(\,(A^X - A^{\varepsilon, X}) \langle g^{\varepsilon,X} , g^{\varepsilon}\rangle\,\bigr)\, \mathrm{d}s\Bigr)^2\\
& \qquad \leq \mathbb{E} \Bigl(\int_0^t \bigl(\mathbb{E}^x|A^X - A^{\varepsilon, X}|^q\bigr)^{1/q} \bigl(\mathbb{E}^x |\langle g^{\varepsilon,X} , g^{\varepsilon}\rangle|^p\bigr)^{1/p}\, \mathrm{d}s\Bigr)^2\\
& \qquad \curlyeqprec \int_0^t \mathbb{E}\,\bigl(\mathbb{E}^x|A^X - A^{\varepsilon, X}|^q\bigr)^{2/q} \, \mathrm{d}s\,,
\end{split}
\end{equation*}
where $\curlyeqprec$ means {\it less than up to a constant}. So either $q>2$, where we get $\int_0^t (\mathbb{E}\,\mathbb{E}^x|A^X - A^{\varepsilon, X}|^q)^{2/q}\, \mathrm{d}s$ as an upper bound or $q\leq 2$, where we get the upper bound $\int_0^t \mathbb{E}\,\mathbb{E}^x|A^X - A^{\varepsilon, X}|^2 \, \mathrm{d}s$.

Let $\{t_i\}_{i=1}^n$ be the jump times of the path $X(\cdot)$ up to time $s$, $t_0:=0$ and $t_n:=s$. Let then $J$ be the set of indices $j$ for which $X(\cdot)$ stays at site $x$ in the time interval $[t_j,t_{j+1}]$. Now applying the definitions \eqref{gEpsilon}-\eqref{gEpsilonX} we get
\begin{equation*}
\begin{split}
\langle g^{\varepsilon,X} , g^{\varepsilon}\rangle &= \langle \sum_{i\in J} \int_{s-t_{i+1}}^{s-t_i} \frac{1}{2\varepsilon}\mathbf{1}_{[\theta-\varepsilon, \theta+\varepsilon]} \mathrm{d}\theta\; , \; \frac{1}{2\varepsilon}\mathbf{1}_{[s-\varepsilon, s+\varepsilon]}\rangle \\
&= \frac{1}{4 \varepsilon^2} \sum_{i\in J} \int_{s-t_{i+1}}^{s-t_i}\langle \mathbf{1}_{[\theta-\varepsilon, \theta+\varepsilon]} \; , \; \mathbf{1}_{[s-\varepsilon, s+\varepsilon]}\rangle \mathrm{d}\theta \\
&= \frac{1}{4 \varepsilon^2} \sum_{i\in J} \int_{s-t_{i+1}}^{s-t_i}  \mathbb{E}[(W_{\theta +\varepsilon}-W_{\theta -\varepsilon})(W_{s +\varepsilon} - W_{s -\varepsilon})]\mathrm{d}\theta \\
&= \frac{1}{8 \varepsilon^2} \sum_{i\in J} \int_{t_{i}}^{t_{i+1}} \bigl[(\gamma +2\varepsilon)^{2H} + |\gamma -2\varepsilon|^{2H} -2\gamma^{2H}\bigr]\mathrm{d}\gamma\,,
\end{split}
\end{equation*}
where $\{W_t\}_t$ is a fractional Brownian motion of the same Hurst parameter $H$. We split this expression into two terms
\begin{equation}\label{Gamma_1}
\Gamma_1:=\frac{1}{8 \varepsilon^2} \int_0^{t_1} \bigl[(\gamma +2\varepsilon)^{2H} + |\gamma -2\varepsilon|^{2H} -2\gamma^{2H}\bigr]\mathrm{d}\gamma
\end{equation}
and
\begin{equation*}
\Gamma_2:=\frac{1}{8 \varepsilon^2} \sum_{i\in J, i\geq 2} \int_{t_{i}}^{t_{i+1}}\bigl[(\gamma +2\varepsilon)^{2H} + |\gamma -2\varepsilon|^{2H} -2\gamma^{2H}\bigr]\mathrm{d}\gamma\,.
\end{equation*}

For the first term, using the same reasoning as in \eqref{first piece 1} and \eqref{first piece 2}, we have
\begin{equation}\label{integral form of integral of f superscript epsilon}
\Gamma_1 =\frac{1}{8}\int_{-1}^1\int_{-1}^1 f''(t_1+\xi \varepsilon+\eta \varepsilon)\,\mathrm{d}\xi\,\mathrm{d}\eta\,,
\end{equation}
where $f(s):=\int_0^s |r|^{2 H}\mathrm{d}r$ and hence $f''(r)=2H \,\sgn(r) |r|^{2H-1}$.\\
Letting $\Delta:=\xi \varepsilon+\eta \varepsilon$ and noting that $t_1$ is exponentially distributed, we have
\begin{equation*}
\mathbb{E}^x |f''(t_1+\Delta)|^p \leq 2H \int_0^s |t_1+\Delta|^{(2H-1)p} \mathrm{d}t_1\,.
\end{equation*}
As we can restrict ourselves to $\varepsilon \leq 1$ and hence $|\Delta|\leq 1$ and as  $0<s<t$, we have
\begin{equation*}
\int_0^s |t_1+\Delta|^{(2H-1)p} \mathrm{d}t_1 \leq \int_{-1}^{t+1} |t_1|^{(2H-1)p} \mathrm{d}t_1\,.
\end{equation*}
So if we choose $p>1$ such that ${(2H-1)p}>-1$, we get a finite bound on $\mathbb{E}^x |f''(t_1+\Delta)|^p$ and hence a bound on $\mathbb{E}^x|\Gamma_1|^p$ that only depends on $t$ and $H$.

Now for the second term, $\Gamma_2$, let
\begin{equation}\label{f superscript epsilon}
f^\varepsilon(\gamma):=\frac{1}{4 \varepsilon^2}\bigl[(\gamma +2\varepsilon)^{2H} + |\gamma -2\varepsilon|^{2H} -2\gamma^{2H}\bigr]\,.
\end{equation}
We have $\displaystyle|f^\varepsilon(\gamma)|\leq 18 \gamma^{2H-2}$ because either $\gamma \leq 4\varepsilon$ which implies that $|\gamma -2\varepsilon|^{2H}\leq (2\varepsilon)^{2H}$ and $(\gamma +2\varepsilon)^{2H}\leq (6\varepsilon)^{2H}$ and hence $|f^\varepsilon|(\gamma)\leq 18 \gamma^{2H-2}$ or $\gamma > 4\varepsilon$ in which case we may write $f^\varepsilon(\gamma)$ as the following
\begin{equation}\label{integral form of f supersript epsilon}
f^\varepsilon(\gamma)=\frac{1}{4}\int_{-1}^1 \int_{-1}^1 2H(2H-1)(\gamma + \xi \varepsilon+\eta \varepsilon)^{2H-2} \,\mathrm{d}\xi\,\mathrm{d}\eta\,.
\end{equation}
Letting again $\Delta:=\xi \varepsilon+\eta \varepsilon$, we have $|\Delta|\leq 2\varepsilon$ and so
\begin{equation*}
(\gamma+\Delta)^{2H-2} \leq \gamma^{2H-2} (1+\Delta/\gamma)^{2H-2} \leq 2^{2-2H} \gamma^{2H-2}\,,
\end{equation*}
which gives $\displaystyle|f^\varepsilon(\gamma)|\leq 8 \gamma^{2H-2}$.\\
So we have
\begin{equation*}
\Gamma_2 \curlyeqprec \int_{t_1}^s |f^\varepsilon(\gamma)|\mathrm{d}\gamma
\curlyeqprec \int_{t_1}^s \gamma^{2H-2}\,\mathrm{d}\gamma\,.
\end{equation*}
So $\Gamma_2$ is bounded (up to a constant) by either $ t_1^{2H-1}$ for $H<\frac{1}{2}$, or $s^{2H-1}$ for $H>\frac{1}{2}$. The case $H=\frac{1}{2}$ can also be treated easily using the inequality $ln(x)\curlyeqprec x^\alpha$ for any $\alpha$ positive. So as $(2H-1)p>-1$, $\mathbb{E}^x|\Gamma_2|^p$ can be bounded by a constant only dependant on $t$ and $H$. So this competes the proof showing that $\mathbb{E}^x |\langle g^{\varepsilon,X} , g^{\varepsilon}\rangle|^p \leq C$, for some $p>1$ and $C$ a constant only dependant on $t$ and $H$.\\
\textbf{Step II: Convergence of $\mathrm{P}_{2,\varepsilon}$}.
For establishing the convergence of $\mathrm{P}_{2,\varepsilon}$ we will use the dominated convergence theorem.\\
In `step I' we showed that
\begin{equation*}
\langle g^{\varepsilon,X} , g^{\varepsilon}\rangle = \frac{1}{2} \sum_{i\in J} \int_{t_{i}}^{t_{i+1}} f^\varepsilon(r) \, \mathrm{d}r\,,
\end{equation*}
where $f^\varepsilon$ is defined in \eqref{f superscript epsilon}.\\

Now let $\{t_i\}_{i=0}^{n+1}$ and $J$ be as in `step I', i.e. $\{t_i\}_{i=1}^n$ be the jump times of the path $X(\cdot)$ up to time $s$, $t_0:=0$ and $t_n:=s$ and $J$ the set of indices $j$ for which $X(\cdot)$ stays at site $x$ in the time interval $[t_j,t_{j+1}]$. So we have
\begin{equation}\label{integral of h superscrit epsilon}
\begin{split}
\langle g^X , g^{\varepsilon}\rangle &= \bigl \langle\mathbf{1}_{[0,s]}(r)\, \delta_{X(s-r)}(z)
\; , \; \frac{1}{2\varepsilon} \mathbf{1}_{[s-\varepsilon,s+\varepsilon]}(r)\,\delta_x(z) \bigr \rangle\\
&= \sum_{i\in J} \bigl \langle\mathbf{1}_{[{s-t_{i+1}\,,\,{s-t_i}}]}
\; , \; \frac{1}{2\varepsilon} \mathbf{1}_{[s-\varepsilon\,,\,s+\varepsilon]} \bigr \rangle\\
&= \sum_{i\in J} \frac{1}{4\varepsilon}\bigl[|t_{i+1}+\varepsilon|^{2H}-|t_i+\varepsilon|^{2H}
+|t_{i}-\varepsilon|^{2H}-|t_{i+1}-\varepsilon|^{2H}
\big]\\
&= \frac{1}{4\varepsilon}\bigl(|t_{1}+\varepsilon|^{2H}-|t_1-\varepsilon|^{2H}\big)+\frac{1}{2} \sum_{i\in J, i>1} \int_{t_{i}}^{t_{i+1}} h^\varepsilon(r) \, \mathrm{d}r\,,
\end{split}
\end{equation}
where
\begin{equation*}
h^\varepsilon(r):=\frac{2H}{2\varepsilon}\bigl[|r+\varepsilon|^{2H-1}
-\sgn(r-\varepsilon)|r-\varepsilon|^{2H-1}\bigr]\,.
\end{equation*}
We will show that $\langle g^X , g^{\varepsilon}\rangle - \langle g^{\varepsilon,X} , g^{\varepsilon}\rangle$ converges to zero. For doing so we shall show that $[\frac{1}{4\varepsilon}\bigl(|t_{1}+\varepsilon|^{2H}-|t_1-\varepsilon|^{2H}\big)-\frac{1}{2} \int_{0}^{t_{1}} f^\varepsilon(r) \, \mathrm{d}r]$ converges to zero and that every $\int_{t_{i}}^{t_{i+1}} (h^\varepsilon-f^\varepsilon)(r) \, \mathrm{d}r$ also converges to zero.\\

By equations \eqref{Gamma_1} and \eqref{integral form of integral of f superscript epsilon}, we have
\begin{equation*}
\int_0^{t_1} f^\varepsilon(r)\, \mathrm{d}r =\frac{1}{4}\int_{-1}^1\int_{-1}^1 2H \sgn(r+\xi \varepsilon+\eta \varepsilon)|r+\xi \varepsilon+\eta \varepsilon|^{2H-1}\,\mathrm{d}\xi\,\mathrm{d}\eta\,.
\end{equation*}
So for a fixed positive $t_1$ this converges to $2H t_1^{2H-1}$. On the other hand $\frac{1}{4\varepsilon}\bigl(|t_{1}+\varepsilon|^{2H}-|t_1-\varepsilon|^{2H}\big)$ also converges to $\frac{1}{2}2H t_1^{2H-1}$.

For $\int_{t_{i}}^{t_{i+1}} (h^\varepsilon-f^\varepsilon)(r) \, \mathrm{d}r$, we will show that $h^\varepsilon-f^\varepsilon$ converges to zero and then apply the dominated convergence to the integral.\\
Using \eqref{integral form of f supersript epsilon} it can be easily shown that \begin{equation*}
\lim_{\varepsilon \downarrow 0} f^\varepsilon(r) = 2H(2H-1)r^{2H-2}\,.
\end{equation*}
By simply recognizing the definition of derivative we have
\begin{equation*}
\lim_{\varepsilon \downarrow 0} h^\varepsilon(r) = 2H(2H-1)r^{2H-2}\,.
\end{equation*}

So it remains to find an integrable $\varepsilon$-independent upper bound. As shown in the paragraph following \eqref{f superscript epsilon}, $f^\varepsilon(r)$  is bounded by $18 \gamma^{2H-2}$ and for $h^\varepsilon(r)$, restricting $\varepsilon$ to be less than $t_{i_1}/2$, where $i_1$ is the first index in $J$ after $1$, we have for all $r\geq t_{i_1}$
\begin{equation}\label{integral form of h superscript epsilon}
h^\varepsilon(r)= \frac{1}{2}2H(2H-1) \int_{-1}^1 |r+u\varepsilon|^{2H-2}\, \mathrm{d}u\,.
\end{equation}
But then as $|r+u\varepsilon|^{2H-2}\leq (\frac{r}{2})^{2H-2}$ it gives $8 r^{2H-2}$ as an upper bound on $h^\varepsilon$. This completes the proof for convergence to zero of $\langle g^X , g^{\varepsilon}\rangle - \langle g^{\varepsilon,X} , g^{\varepsilon}\rangle$.

Now, for applying the dominated convergence theorem to $\mathrm{P}_{2,\varepsilon}$ we only need to find an $\varepsilon$-independent upper bound $G$ on $\langle g^X , g^{\varepsilon}\rangle - \langle g^{\varepsilon,X} , g^{\varepsilon}\rangle$ having the property that $\mathbb{E}\bigl(\int_0^t \mathbb{E}^x(G)\bigr)^2 < \infty$. For $\langle g^{\varepsilon,X}\rangle - \langle g^{\varepsilon,X} , g^{\varepsilon}\rangle$ such an upper bound has been established in step I above. It remains to find an upper bound on $\langle g^X , g^{\varepsilon}\rangle$.

For $2H-1\geq 0$ the situation is quite trivial because using equation \eqref{integral of h superscrit epsilon} we easily get
\begin{equation*}
\langle g^X , g^{\varepsilon}\rangle=\frac{1}{2} \sum_{i \in J} \int_{t_{i}}^{t_{i+1}} h^\varepsilon(r) \, \mathrm{d}r\,.
\end{equation*}
When $2H-1\geq 0$, equation \eqref{integral form of h superscript epsilon} remains valid for any value of $\varepsilon$ and $r$. As for any $\varepsilon\leq 1$ we have
\begin{equation*}
\int_{-1}^1 |r+u\varepsilon|^{2H-2}\, \mathrm{d}u \leq \int_{-1}^{t+1} |u|^{2H-2}\, \mathrm{d}u\,,
\end{equation*}
hence we get an upper bound dependant only on t and H.

So we consider now the case of $2H-1<0$. For $2H<1$ and any $r>0$ we have
\begin{equation*}
\rho(r):=\frac{1}{4\varepsilon}\bigl(|r+\varepsilon|^{2H}-|r-\varepsilon|^{2H}\big) \leq 2 r^{2H-1}.
\end{equation*}
This is true because either $r\leq 2\varepsilon$ in which case
\begin{equation*}
\begin{split}
\rho(r)&\leq \frac{1}{4\varepsilon}\bigl((3\varepsilon)^{2H}-\varepsilon^{2H}\big)\\
&\leq \varepsilon^{2H-1} \leq 2r^{2H-1}\,,
\end{split}
\end{equation*}
or $r> 2\varepsilon$, where we have
\begin{equation*}
\begin{split}
\rho(r)& \leq \frac{1}{4} \int_{-1}^{1}2H\,(r+\varepsilon u)^{2H-1}\, \mathrm{d}r\\
& \leq \frac{1}{4} \int_{-1}^{1}(\frac{r}{2})^{2H-1}\, \mathrm{d}r \leq r^{2H-1}\,.
\end{split}
\end{equation*}
So by \eqref{integral of h superscrit epsilon} we have
\begin{equation*}
|\langle g^X , g^{\varepsilon}\rangle|\leq 2\sum_{i\in J}(t_i^{2H-1}+t_{i+1}^{2H-1})\leq 2N t_1^{2H-1}\,,
\end{equation*}
where $N$ is the number of jumps in $[0,t]$.\\
Applying the H\"older inequality with $\frac{1}{p}+\frac{1}{q}+\frac{1}{r}=1$ we have
\begin{equation*}
\mathbb{E}^x|A^X \langle g^X , g^{\varepsilon}\rangle| \curlyeqprec
(\mathbb{E}^x\,|A^X|^q)^{1/q} (\mathbb{E}^x\,N\,^r)^{1/r} (\mathbb{E}^x\,t_1^{(2H-1)p})^{1/p}.
\end{equation*}
So we just need to pick a $p>1$ with $(2H-1)p+1>0$, in which case the exponential distribution of $t_1$ implies
\begin{equation*}
\mathbb{E}^x\,t_1^{(2H-1)p} \leq \int_0^s \,t_1^{(2H-1)p} \, \mathrm{d}t_1 = s^{(2H-1)p+1}\leq t^{(2H-1)p+1}\,.
\end{equation*}
In fact the proof of lemma \ref{uniform boundedness} also shows that for any $q\geq 1$, $\mathbb{E} \,\mathbb{E}^x\,|A^X|^{q}$ is uniformly bounded in $0\leq s \leq t$. As $N$ has a Poisson distribution $\mathbb{E}^x\,N^r$ is also finite.

%\clearpage

\def\polhk#1{\setbox0=\hbox{#1}{\ooalign{\hidewidth
  \lower1.5ex\hbox{`}\hidewidth\crcr\unhbox0}}} \def\cprime{$'$}

\end{document}